\date{May 16, 2016}
\newtheorem{Theorem}{Theorem}
\newtheorem{Lemma}[Theorem]{Lemma}
\newtheorem{Proposition}[Theorem]{Proposition}
\newtheorem{Corollary}[Theorem]{Corollary}
\newtheorem{Conjecture}[Theorem]{Conjecture}
\theoremstyle{definition}
\newtheorem{Definition}[Theorem]{Definition}
\newtheorem{Example}[Theorem]{Example}
\newtheorem{Remark}[Theorem]{Remark}
\providecommand{\ch}[1]{\text{\raise 2pt \hbox{$\chi$}\kern-0.2pt}_{#1}}
\def\XXint#1#2#3{{\setbox0=\hbox{$#1{#2#3}{\int}$}
      \vcenter{\hbox{$#2#3$}}\kern-.5\wd0}}
\newcommand{\N}{\mathbb{N}}     
\newcommand{\bbD}{\mathbb{D}}       
\newcommand{\R}{\mathbb{R}}     
\newcommand{\Z}{\mathbb{Z}}         
\newcommand{\calB}{\mathscr{B}}
\newcommand{\calC}{\mathscr{C}}
\newcommand{\calD}{\mathscr{D}}
\newcommand{\calI}{\mathscr{I}}
\newcommand{\calR}{\mathscr{R}}
\DeclareMathOperator{\diam}{diam}			
\renewcommand{\geq}{\geqslant}
\renewcommand{\leq}{\leqslant}
\renewcommand{\epsilon}{\varepsilon}
\newcounter{aff}
\newenvironment{Claim}{\par \addvspace{0.2cm} \refstepcounter{aff} \noindent \textit {Claim \theaff .~}}{\par \addvspace{0.2cm}}
\newenvironment{demo}{\par \addvspace{0.2cm} \noindent\textit{Proof.~}}{\hfill$\blacksquare$ \setcounter{aff}{0} \par  \addvspace{0.2cm} }
\begin{document}

\author{Emma D'Aniello and Laurent Moonens}

\title{Averaging on $n$-dimensional rectangles}

\thanks{Laurent Moonens was partially supported by the  French ANR project ``GEOMETRYA'' no.~ANR-12-BS01-0014.}
\subjclass[2010]{Primary: 42B25, Secondary: 26B05.}

\maketitle

\begin{abstract}In this work we investigate families of translation invariant differentiation bases $\calB$ of rectangles in $\R^n$, for which $L\log^{n-1}L(\R^n)$ is the largest Orlicz space that $\calB$ differentiates. In particular, we improve on techniques developed by A.~Stokolos in \cite{STOKOLOS1988} and \cite{STOKOLOS2008}.
\end{abstract}

\section{Introduction}
Recall that a {\it differentiation basis} in ${\Bbb R}^{n}$ is a collection ${\calB}  =\bigcup_{x \in {\Bbb R}^{n}} \calB(x)$ of bounded, measurable sets  with positive measure such that for each $x \in {\Bbb R}^{n}$ there is a subfamily ${\calB}(x)$ of sets of ${\calB}$ so that each $B \in {\calB}(x)$ contains $x$ and  in ${\calB}(x)$ there are sets  arbitrarily small diameter (see \emph{e.g.} de Guzm\'an \cite[p.~104]{Guzman1974} and \cite[p.~42]{Guzman1975}).

Let ${\calB}$ be a differential basis in ${\Bbb R}^{n}$, and let $M_{\calB}$ be the corresponding maximal functional, that is, for any {}{summable} function $f$ in $\R^n$, let for $x\in\R^n$:
$$M_{\calB} f(x)= \sup_{\begin{subarray}{c}B \in {\calB}\\B\ni x\end{subarray}}  \frac{1}{|B|} \int_{B}   \vert f\vert,$$ 
where$|A|$ denotes the Lebesgue measure of a measurable set $A\subseteq\R^n$.

In many areas of analysis a key role is played by the so-called {\it weak type estimates} for $M_{\calB} f$; this is the case, in particular, for the weak $(1,1)$ estimate:
\begin{equation} \label{equ1}
\vert \{x: M_{\calB} f(x) > \lambda \} \vert \leq C \frac{{\Vert f \Vert}_{1}}{\lambda}, \hspace{0.6cm} \lambda >0,
\end{equation}
and the weak $L\log^{d} L$ estimate for $0<d\leq n$:  
\begin{equation} \label{equ2}
\vert \{x: M_{\calB} f(x) > \lambda \} \vert \leq C \int_{{\Bbb R}^{n}} \frac{\vert f \vert}{\lambda}\left(1 + \log_{+}^{d} \frac{\vert f \vert}{\lambda}\right), \hspace{0.6cm} \lambda >0{}{,}
\end{equation}
{}{where we let $\log_+t:=\max(\log t,0)$ for $t>0$.}
Following \emph{e.g.} Stokolos \cite{STOKOLOS2005}, if $M_{\calB}f$ satisfies (\ref{equ1}) or (\ref{equ2}), we say that the basis ${\calB}$ has the corresponding weak type.  

As far as translation invariant bases $\calB$ consisting of multidimensional intervals (also called rectangles or parallelepipeds, that is Cartesian products of one dimensional intervals) are concerned, an old result by Jessen, Marcinkiewicz and Zygmund \cite[Theorem 4]{JMZ}, quantified {}{independently by Fava \cite{FAVA1972} and} de~Guzm\'an \cite{Guzman1974}, ensures that $M_\calB$ always enjoys weak type $L\log^{n-1} L${}{; this shows in particular that Zygmund's conjecture, stating that translation-invariant bases of $n$-dimensional rectangles whose sides are increasing functions of $d$ parameters differentiate $L\log^{d-1}L(\R^n)$, holds for $d=n$.}

In dimension $2$, Stokolos \cite{STOKOLOS1988} gave a complete characterization of all the weak estimates that translation invariant bases of rectangles can support: either the dyadic parents of elements of $\calB$ can be, up to translation, classified into a finite number of families totally ordered by inclusion~---~in which case $M_\calB$ satisfies a weak $(1,1)$ estimate~---~, or it is not the case and de Guzm\'an's weak $L\log L$ estimate is sharp.

In the general case of translation invariant bases of rectangles in $\R^n$, {}{Fefferman and Pipher \cite{FP2005} gave in 2005 a covering lemma yielding $L\log^d L$ estimates for some translation-invariant bases $\calB$ of rectangles in $\R^n$. T}hose providing a weak type $(1,1)$ estimate for $M_\calB$ have been completely characterized in Stokolos \cite{STOKOLOS2005}: a translation invariant base ${\calR}$ of rectangles is of weak type $(1,1)$ if and only if  the dyadic parents of elements of $\calB$ can be, up to translation, classified into a finite number of families totally ordered by inclusion (we recall this result in section~\ref{sec.3} below). {}{As far as Zygmund's conjecture is concerned, this corresponds to its validity for $d=1$.}

{}{Even though a result by C\'ordoba \cite{CORDOBA1979}, stating \emph{e.g.} that the translation-invariant basis $\calB$ in $\R^3$ verifying:
$$
\calB(0)=\{[0,s]\times [0,t]\times [0,st]:s,t>0\},
$$
yields a weak $L\log L$ estimate for $M_\calB$, hence supporting Zygmund's conjecture for $d=2$, it follows from a result by Soria \cite[Proposition~5]{SORIA1986} that Zygmund's conjecture is false in general, for he there constructs continuous, increasing functions $\phi,\psi:(0,\infty)\to (0,\infty)$ such that the translation-invariant basis of rectangles $\calB'$ in $\R^3$ satisfying:
$$
\calB'(0)=\{[0,s]\times [0,t\phi(s)]\times [0,t\psi(s)]:s,t>0\},
$$
differentiates no more than $L\log^2L(\R^3)$, meaning in particular that de Guzmán's $L\log^2L$ estimate is sharp for $M_{\calB'}$. It is also Soria's observation (see \cite[Proposition~2]{SORIA1986}) that Córdoba's result implies that the ``Soria basis'' $\calB''$ verifying:
$$
\calB''(0)=\{[0,s]\times [0,t]\times [0,1/t]:s,t>0\},
$$
yields a weak $L\log L$ inequality for $M_{\calB''}$ (note here that $\calB''$ is not, \emph{stricto sensu}, a differentiation basis~---~it lacks the ``diameter'' condition~---~, which does not prevent us to extend to such families the definitions made above).
}

As of today, there is no characterization of translation invariant bases of rectangles in $\R^n$, $n\geq 3$, for which de Guzm\'an's weak $L\log^{n-1} L$ estimate is sharp (neither is it known whether the sharpness of some weak $L\log^d L$ inequality, $d>1$, would imply that $d$ is an integer). In 2008, Stokolos \cite{STOKOLOS2008} gave examples of Soria bases in $\R^3$ (\emph{i.e.} bases of the form $\calB\times \calI$, where $\calB$ is a basis of rectangles in $\R^2$ and $\calI$ denotes the basis of all intervals in $\R$) for which de Guzm\'an's weak $L\log^2 L$ estimate is sharp.

Our intention in this work is mainly to improve on Stokolos' techniques in order to give new examples of translation invariant bases of rectangles in $\R^n$ for which the weak $L\log^{n-1} L$ estimate on $M_\calB$ is sharp (see section~\ref{sec.4} below). In particular we give a way to construct, from a (strictly) decreasing sequence of rectangles in $\R^n$, a differentiation basis built from it and failing to differentiate spaces below $L\log^{n-1}L(\R^n)$ (see Theorem~\ref{thm.stokn} and Remark~\ref{rmk.stok22} below). We finish in section~\ref{sec.5} by some observations showing that de Guzm\'an's weak $L\log^{n-1}L$ estimate can be improved once we know that, in some coordinate plane, the projections of rectangles in $\calB$ can be classified, up to translations, into a finite set of families totally ordered by inclusion.

 \section{Preliminaries}

Let us now precisely fix the context in which we shall work.
\subsection{Orlicz spaces}
Given an Orlicz function $\Phi:[0,\infty)\to [0,\infty)$ (\emph{i.e.} a convex and increasing function satisfying $\Phi(0)=0$), we define the associated Banach space $L^\Phi(\R^n)$ as the set of all measurable functions $f$ on $\R^n$ for which one has $\Phi(|f|)\in L^1$. For the Orlicz function $\Phi_d(t):=t(1+\log_+^dt)$, $0< d\leq n$, we write $L\log^d L(\R^n)$ instead of $L^{\Phi_d}(\R^n)$. Is is clear that for $\Phi(t)=t^p$, $p\geq 1$, the Orlicz space $L^\Phi(\R^n)$ coincides with the usual Lebesgue space $L^p(\R^n)$.

\subsection{Families of standard rectangles} In the sequel, $\calR$ will always stand for a family of \emph{standard rectangles}, \emph{i.e.} rectangles of the form $[0,\alpha_1]\times\cdots\times [0,\alpha_n]$ in some $\R^n$, $n\geq 2$, with $0<\alpha_i\leq 1$, $1\leq i\leq n$. We will moreover say that those are \emph{dyadic} in case one has $\alpha_i=2^{-m_i}$ with $m_i\in\N$, for each $1\leq i\leq n$ (we let as usual $\N=\{0,1,2,\dots\}$ denote the set of all natural numbers).

\subsection{Weak inequalities} Given a family $\calR$ of standard rectangles in $\R^n$, we associate to it a maximal operator $M_\calR$ defined for a measurable function $f$ by:
$$
M_\calR f(x):=\sup\left\{\frac{1}{|R|}\int_{\tau(R)} |f|: R\in\calR, \tau\text{ translation}, x\in\tau(R)\right\}.
$$
In case $\Phi$ is an Orlicz function, we shall say that $M_\calR$ satisfies a weak $L^\Phi$ inequality in case there exists $C>0$ such that, for any $\lambda>0$ and any $f\in L^\Phi(\R^n)$, we have:
$$
|\{M_\calR f>\lambda\}|\leq\int_{\R^n} \Phi\left(\frac{C|f|}{\lambda}\right).
$$

\subsection{Reduction to families of dyadic rectangles} Given a family $\calR$ of standard rectangles in $\R^n$, we define $\calR^*:=\{R^*:R\in\calR\}$, where for each standard rectangle $R$, we denote by $R^*$ the standard dyadic rectangle with the smallest measure, having the property to contain $R$. Since it is easy to see that one has:
$$
\frac{1}{2^n}\cdot M_{\calR^*} f\leq M_\calR f\leq 2^n\cdot M_{\calR^*} f,
$$
on $\R^n$ for all measurable $f$, it is obvious that $M_\calR$ satisfies a weak $L^\Phi$ inequality if and only if $M_{\calR^*}$ does. For this reason, we shall always assume in the sequel that $\calR$ is a family of standard \emph{dyadic} rectangles.

\subsection{Links to differentiation theory} Given a family of standard rectangles $\calR$ satisfying $\inf\{\diam R:R\in\calR\}=0$, one can associate to it a translation invariant differentiation basis $\calB:=\{\tau(R): R\in\calR, \tau\text{ translation}\}$ and define, for $x\in\R^n$, $\calB(x):=\{R\in\calB: R\ni x\}$. In many cases, it then follows from the Sawyer-Stein principle (see \emph{e.g.} \cite[Chapter~1]{GARSIA}) that a weak $L^\Phi$ inequality for $M_\calR$ is equivalent to having:
$$
f(x)=\lim_{\begin{subarray}{c}\diam R\to 0\\R\in\calB(x)\end{subarray}} \frac{1}{|R|} \int_{R} f\qquad\text{for a.e. }x\in\R^n,
$$
for all $f\in L^\Phi(\R^n)$ (we shall say in this case that $\calR$ \emph{differentiates} $L^\Phi(\R^n)$).
\subsection{Rademacher-type functions} {}{Recall that one denotes by $\chi_A$ the characteristic function of a set $A\subseteq\R^n$.} We define the sequence $(r_i)$ of Rademacher-type functions on $\R$ in the following way: we let $r_1=\chi_{\Z+[0,1/2)}$ and, for $i\geq 2$, we define $r_i$ by asking that, for $x\in \R$, one has $r_i(x)=r_{i-1}(2x\mod 1)$. It is easy to see that $r_i$ is $2^{1-i}$-periodic for each $i\geq 1$ and that the $r_i$'s are independent and identically distributed (IID) in the sense that we have:
$$
\int_I \prod_{l=1}^k r_{i_l} =\left(\frac 12\right)^k |I|,
$$
for any finite sequence $1\leq i_1<i_2<\cdots <i_k$ of distinct integers and any interval $I$ whose length is a multiple of $2^{-i_1}$.

\section{Comparability conditions on rectangles}\label{sec.3}
Following Stokolos \cite{STOKOLOS1988} (and using the terminology introduced in Moonens and Rosenblatt \cite{MR2012}), we say that a family of standard 
dyadic rectangles in $\R^n$ has \emph{finite width} in case it is a finite union of families of rectangles totally ordered by inclusion, and that it has 
\emph{infinite width} otherwise. Il follows from a general result by Dilworth \cite{DILWORTH} that a family of rectangles in $\R^n$ has infinite width if and 
only if it contains families of incomparable (with respect to inclusion) rectangles having arbitrary large (finite) cardinality.

The following lemma by Stokolos \cite[Lemma~1]{STOKOLOS2005} is useful to relate the weak $(1,1)$ behaviour of the maximal operator $M_\calR$ associated to a family of rectangles, and to their comparability properties.
\begin{Lemma}
A family of rectangles in $\R^n$ is a chain (with respect to inclusion) if and only if the projections of its elements on the 
$x_1x_j$ plane form a chain of rectangles in $\R^2$, for any $j=2,\dots,n$.
In particular, a family $\calR$ of rectangles in $\R^n$ has finite width if and only if, for each $j=2,\dots,n$, the family:
$$
\{p_j(R):R\in\calR\}
$$
has finite width, where $p_j:\R^n\to\R^2$ denotes the projection on the $x_1x_j$ plane.
\end{Lemma}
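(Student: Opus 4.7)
My plan is to prove the chain characterisation first and then deduce the finite-width statement from it via Dilworth's theorem.

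The forward implication will be immediate: two standard rectangles $R=\prod_{i=1}^n[0,\alpha_i]$ and $R'=\prod_{i=1}^n[0,\beta_i]$ satisfy $R\subseteq R'$ if and only if $\alpha_i\leq\beta_i$ for every $i$, and this equivalence is preserved by the projection $p_j$ (which only retains the first and $j$-th coordinates), so any chain in $\calR$ projects to a chain in $\R^2$ for each $j\in\{2,\dots,n\}$.

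For the converse, I would take two rectangles $R,R'\in\calR$ with parameters $(\alpha_i)_i$ and $(\beta_i)_i$, and compare their first coordinates. In the principal case $\alpha_1<\beta_1$, the inclusion $p_j(R')\subseteq p_j(R)$ is ruled out for each $j\geq 2$ (it would require $\beta_1\leq\alpha_1$), so the chain property of $p_j(\calR)$ forces $p_j(R)\subseteq p_j(R')$; this yields $\alpha_j\leq\beta_j$ for every $j\geq 2$, hence $R\subseteq R'$. The symmetric case $\beta_1<\alpha_1$ is analogous. The case $\alpha_1=\beta_1$ is the main obstacle, since the chain property of each $p_j(\calR)$ alone no longer selects a direction (as pairs of real numbers are automatically comparable); completing the argument here requires additional structural input, for instance by fixing the first coordinate and reducing the problem to a chain characterisation of the fibre $\{R\in\calR:\alpha_1(R)=\alpha_1\}$ viewed as a family of rectangles in $\R^{n-1}$.

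The ``in particular'' statement will then follow from Dilworth's theorem. If $\calR=C_1\cup\cdots\cup C_W$ decomposes into $W$ chains, then each $p_j$ produces a chain decomposition of $p_j(\calR)$, so $p_j(\calR)$ has width at most $W$. Conversely, if each $p_j(\calR)$ decomposes into $w_j$ chains $\{C_{j,k}\}_{k=1}^{w_j}$, I would partition $\calR$ by assigning to each $R\in\calR$ the tuple $(k_2(R),\dots,k_n(R))$ recording the index of the chain in $p_j(\calR)$ containing $p_j(R)$; applying the chain characterisation within each class of this partition then shows each class is a chain in $\calR$, whence $\calR$ decomposes into at most $\prod_{j=2}^n w_j$ chains.
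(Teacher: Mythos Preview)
The paper does not prove this lemma; it is merely quoted from Stokolos \cite[Lemma~1]{STOKOLOS2005}, so there is no proof in the paper to compare against. Your outline is therefore being assessed on its own merits.

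Your handling of the forward implication and of the case $\alpha_1\neq\beta_1$ is correct. You are also right to flag the case $\alpha_1=\beta_1$ as the obstruction---but the suggested fix (passing to the fibre over a fixed $\alpha_1$) does not work, because on that fibre the hypothesis about the $x_1x_j$-projections degenerates to the trivial fact that each remaining side length is a real number, and gives no cross-information between different coordinates. In fact the lemma, exactly as stated in the paper, is \emph{false}: take $n=3$ and
\[
R=[0,1]\times[0,\tfrac12]\times[0,1],\qquad R'=[0,1]\times[0,1]\times[0,\tfrac12].
\]
Then $p_2(R)\subseteq p_2(R')$ and $p_3(R')\subseteq p_3(R)$, so both $\{p_2(R),p_2(R')\}$ and $\{p_3(R),p_3(R')\}$ are chains, yet $R$ and $R'$ are incomparable. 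The same phenomenon breaks the ``in particular'' clause: the family $\calR=\{[0,1]\times[0,2^{-k}]\times[0,2^{-l}]:k,l\in\N\}$ has $p_2(\calR)$ and $p_3(\calR)$ both equal to a single chain (width~$1$), while $\calR$ itself contains antichains of every finite size and hence has infinite width. Your Dilworth argument for the converse in the finite-width statement relies on the chain characterisation inside each class of your partition, so it inherits the same gap.

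What repairs the statement is to require the hypothesis for \emph{every} coordinate plane $x_ix_j$ ($1\leq i<j\leq n$), not just those through the $x_1$-axis; then the argument is immediate (if some $\alpha_i<\beta_i$ and some $\alpha_j>\beta_j$, the projection on the $x_ix_j$-plane is not a chain). With that corrected hypothesis your Dilworth-based deduction of the finite-width equivalence goes through unchanged, now partitioning $\calR$ according to the tuple $(k_{ij}(R))_{1\leq i<j\leq n}$.
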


Using this lemma and \cite[Lemma~1]{STOKOLOS1988}, Stokolos \cite[Theorem~2]{STOKOLOS2005} obtains the 
following geometrical characterization of families of rectangles providing a weak $(1,1)$ inequality.
\begin{Theorem}[Stokolos]
Assume $\calR$ is a differentiating family of standard, dyadic rectangles in $\R^n$. The maximal operator 
$M_\calR$ satisfies a weak $(1,1)$ inequality if and only if $\calR$ has finite width.
\end{Theorem}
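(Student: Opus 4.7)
The plan is to prove each implication separately, with the bulk of the work lying in the reverse direction.

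For the forward direction (finite width $\Rightarrow$ weak $(1,1)$), I would appeal to subadditivity and martingale theory. Writing $\calR=\calR_1\cup\cdots\cup\calR_k$ as a union of chains, one has $M_\calR\leq\sum_{i=1}^k M_{\calR_i}$, so it suffices to prove the weak $(1,1)$ bound for a single chain. Since the elements of $\calR_i$ are standard dyadic and totally ordered by inclusion, their sidelengths shrink coordinatewise together; restricting the supremum in $M_{\calR_i}$ to dyadic-grid translates produces a martingale maximal operator for a genuine filtration of $\sigma$-algebras, hence of weak type $(1,1)$ by Doob's inequality. Allowing arbitrary translates only costs a dimensional constant, by a standard shifted-dyadic-grid covering.

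For the reverse direction (infinite width $\Rightarrow$ failure of weak $(1,1)$), I would reduce to dimension $2$ and invoke Stokolos' two-dimensional result \cite{STOKOLOS1988}. By the preceding lemma, infinite width of $\calR$ forces, for some $j\in\{2,\dots,n\}$, the planar family $p_j(\calR):=\{p_j(R):R\in\calR\}$ to have infinite width in $\R^2$. The two-dimensional characterization then tells us $M_{p_j(\calR)}$ fails the weak $(1,1)$ inequality: for every $m\in\N$ there exist a nonnegative $f_m\in L^1(\R^2)$ and a threshold $\lambda_m>0$ with
$$
|\{M_{p_j(\calR)}f_m>\lambda_m\}|>m\,\|f_m\|_1/\lambda_m.
$$

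The main step is then to tensorize these examples up to $\R^n$. For each $N\geq 2$, set $\{i_1,\dots,i_{n-2}\}:=\{2,\dots,n\}\setminus\{j\}$ and define
$$
F_{m,N}(x):=f_m(x_1,x_j)\,\chi_{[0,N]^{n-2}}(x_{i_1},\dots,x_{i_{n-2}})\quad(x\in\R^n).
$$
A direct computation, exploiting that each $R\in\calR$ has sidelengths at most $1$, shows that whenever $x_{i_k}\in[1,N-1]$ for every $k$ one has the pointwise bound
$$
M_\calR F_{m,N}(x)\geq M_{p_j(\calR)}f_m(x_1,x_j).
$$
The reason is that any planar translate of $p_j(R)$ through $(x_1,x_j)$ lifts to an $n$-dimensional translate $\tau R$ of $R$ through $x$ whose inert-coordinate factors lie entirely inside $[0,N]$; on such a $\tau R$ the characteristic factor of $F_{m,N}$ is identically $1$, and the tensor structure forces the $n$-dimensional average of $F_{m,N}$ over $\tau R$ to match the $2$-dimensional average of $f_m$ over the corresponding planar translate. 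Combined with $\|F_{m,N}\|_1=N^{n-2}\|f_m\|_1$, a hypothetical weak $(1,1)$ estimate for $M_\calR$ with constant $C$ would force
$$
m\,(N-2)^{n-2}<C\,N^{n-2}
$$
for every $m,N$; sending $N\to\infty$ first and then $m\to\infty$ yields a contradiction.

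The main obstacle I anticipate is carefully justifying the pointwise bound $M_\calR F_{m,N}(x)\geq M_{p_j(\calR)}f_m(x_1,x_j)$ on $\R^2\times[1,N-1]^{n-2}$. The content is purely geometric: given $R\in\calR$ and a planar translate of $p_j(R)$ through $(x_1,x_j)$, one must check that the translation parameters in the inert coordinates can be simultaneously chosen so that $x_{i_k}$ lies in the translate and the translate still sits inside $[0,N]$. This succeeds precisely because each $R$ is standard, so $|I_{i_k}|\leq 1$, and the constraint $x_{i_k}\in[1,N-1]$ leaves the required slack in both directions.
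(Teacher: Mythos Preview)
The paper does not give its own proof of this theorem; it simply attributes the result to Stokolos \cite[Theorem~2]{STOKOLOS2005} and notes that his argument combines the preceding projection lemma with \cite[Lemma~1]{STOKOLOS1988}. Your reverse implication follows exactly this route: use the lemma to find a coordinate plane on which the projected family has infinite width, invoke the two-dimensional result to manufacture bad functions, and tensorize. The tensorization you describe is correct, and your care about the inert coordinates (using that sidelengths are at most $1$ and restricting to $[1,N-1]$) is precisely what is needed.

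For the forward implication your plan is sound in spirit but the last step is imprecise. Reducing to a single chain and observing that \emph{dyadic-grid} translates form a genuine filtration, so that Doob applies, is fine. The passage to arbitrary translates via ``shifted-dyadic-grid covering'' is where you should be careful: the usual $1/3$-trick produces a containing dyadic rectangle from one of $3^n$ grids, but that rectangle need not be a translate of any element of your chain $\calR_i$, so it does not feed back into the martingale operator. A cleaner route is a Vitali argument: since $\calR_i$ is totally ordered, any two translates $\tau R$, $\tau' R'$ with $R\subseteq R'$ and $\tau R\cap\tau' R'\neq\emptyset$ satisfy $\tau R\subseteq 3\,\tau' R'$; this engulfing property yields weak $(1,1)$ with constant $3^n$ directly. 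Either way the direction is standard, but as written your shifted-grid sentence would not survive a referee without more detail.
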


We intend now to discuss some examples of families of rectangles having infinite width, but providing different optimal weak type inequalities.

Our first example will deal with families of rectangles in $\R^n$ for which the de Guzm\'an's $L\log^{n-1}L$ weak inequality is optimal.

\section{Families of rectangles for which $L\log^{n-1}L$ is sharp}\label{sec.4}

In this section we provide some examples of families of rectangles $\calR$ in $\R^n$ for which de Guzm\'an's $L\log^{n-1}L$ estimate is sharp. To this purpose, we now state a sufficient condition (in the spirit of Stokolos' \cite[Lemma~1]{STOKOLOS1988}) on a family of rectangles providing this sharpness. 

\begin{Proposition}\label{prop.stokn}
Assume that $\calR$ is a family of standard dyadic rectangles in $\R^n$ and that there exists and integer $1\leq d\leq n-1$, together with positive constants $c$ and $c'\geq 1$ depending only on $n$ and $d$, having the following property: for each sufficiently large $k\in\N$, there exist sets $\Theta_k$ and $Y_k$ in $\R^n$ with the following properties:
\begin{enumerate}
\item[(i)] $\Theta_k\subseteq Y_k$;
\item[(ii)] $|Y_k|\geq  c\cdot 2^{dk} k^{d} |\Theta_k|$;
\item[(iii)] for each $x\in Y_k$, one has $M_\calR \chi_{\Theta_k}(x)\geq  c'2^{-dk}$.
\end{enumerate}
Under these assumptions, if $\Phi$ is an Orlicz function satisfying $\Phi=o(\Phi_{d})$ at $\infty$, then $M_\calR$ does \emph{not} satisfy a weak $L^\Phi$ estimate. In particular, $M_\calR$ does not satisfy a weak $(1,1)$ estimate. 
\end{Proposition}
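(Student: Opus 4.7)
The plan is to argue by contradiction, testing the purported weak $L^\Phi$ estimate against the characteristic functions $f_k:=\chi_{\Theta_k}$ at a threshold $\lambda_k$ chosen to exploit assumption (iii). Suppose that $M_\calR$ satisfies a weak $L^\Phi$ inequality with some constant $C>0$, and fix $\lambda_k:=\tfrac{c'}{2}\cdot 2^{-dk}$. By (iii), each $x\in Y_k$ gives $M_\calR f_k(x)\geq c'2^{-dk}>\lambda_k$, so $Y_k\subseteq\{M_\calR f_k>\lambda_k\}$, and the weak inequality applied to $f_k$ yields
\begin{equation*}
|Y_k|\leq |\{M_\calR f_k>\lambda_k\}|\leq\int_{\R^n}\Phi\!\left(\frac{C\chi_{\Theta_k}}{\lambda_k}\right)=\Phi\!\left(\frac{C}{\lambda_k}\right)|\Theta_k|.
\end{equation*}

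Combining this with the lower bound (ii) on $|Y_k|/|\Theta_k|$, one obtains
\begin{equation*}
\Phi\!\left(\frac{C}{\lambda_k}\right)\geq \frac{|Y_k|}{|\Theta_k|}\geq c\cdot 2^{dk}k^d.
\end{equation*}
Write $t_k:=C/\lambda_k=(2C/c')\,2^{dk}$; as $k\to\infty$ one has $t_k\to\infty$, and a direct computation gives
\begin{equation*}
\Phi_d(t_k)=t_k\bigl(1+\log_+^d t_k\bigr)\sim \frac{2C}{c'}\,(d\log 2)^d\cdot 2^{dk}k^d,
\end{equation*}
so that $\Phi_d(t_k)\leq K\cdot 2^{dk}k^d$ for some constant $K=K(n,d,C,c')$ and all sufficiently large $k$. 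Hence $\Phi(t_k)\geq (c/K)\,\Phi_d(t_k)$ for all large $k$, which is incompatible with the assumption $\Phi(t)/\Phi_d(t)\to 0$ as $t\to\infty$. This contradiction proves that no such weak $L^\Phi$ estimate can hold.

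The last assertion is immediate: for $d\geq 1$ the function $\Phi(t)=t$ satisfies $\Phi(t)/\Phi_d(t)=1/(1+\log_+^d t)\to 0$ at infinity, so the already-proved statement applies. The only point requiring any care in the above argument is the verification that the argument $t_k=C/\lambda_k$ indeed tends to infinity (ensuring that the asymptotic hypothesis $\Phi=o(\Phi_d)$ at $\infty$ can be invoked) and that the chain of constants hidden in "$\sim$" can be absorbed into a single multiplicative constant independent of $k$; both are routine once the scaling $\lambda_k\asymp 2^{-dk}$ is fixed, so no real obstacle arises.
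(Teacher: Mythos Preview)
Your proof is correct and follows essentially the same approach as the paper: both test the weak inequality against $\chi_{\Theta_k}$ at a level comparable to $2^{-dk}$, use (ii) and (iii) to force $\Phi$ of the rescaled argument to grow like $2^{dk}k^d$, and then compare with the explicit asymptotics of $\Phi_d$ to contradict $\Phi=o(\Phi_d)$. The only cosmetic difference is that the paper normalizes the test function (taking $f_k=(1/c')2^{dk}\chi_{\Theta_k}$ and threshold $\lambda\approx 1$) whereas you keep $f_k=\chi_{\Theta_k}$ and scale the threshold $\lambda_k\approx 2^{-dk}$ instead; these are equivalent.
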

\begin{demo}
Define, for $k$ sufficiently large, $f_k:=(1/c')\cdot 2^{dk} \chi_{\Theta_k}$, where $\Theta_k$ and $Y_k$ are associated to $k$ and $\calR$ according to (i-iii).
\begin{Claim}\label{cl.Mphi} For each sufficiently large $k$, we have:
$$
|\{M_\calR f_k\geq 1\}|\geq \kappa(n,d)\int_{\R^n} \Phi_{d}(f_k),
$$
where $\kappa(n,d):=\frac{d^d}{cc'}$ is a constant depending only on $n$ and $d$.
\end{Claim}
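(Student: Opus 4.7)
The plan is purely computational: hypotheses (i)-(iii) were engineered so that the inequality follows at once from the positive homogeneity of $M_\calR$ together with a direct evaluation of $\int \Phi_d(f_k)$.

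First, I would exploit positive homogeneity of the maximal operator to write $M_\calR f_k = (1/c')\, 2^{dk}\, M_\calR \chi_{\Theta_k}$. Hypothesis (iii) then yields $M_\calR f_k(x) \geq 1$ for every $x \in Y_k$, giving the inclusion $Y_k \subseteq \{M_\calR f_k \geq 1\}$. Combined with (ii), this produces the level-set lower bound
$$
|\{M_\calR f_k \geq 1\}| \;\geq\; |Y_k| \;\geq\; c\cdot 2^{dk} k^d\, |\Theta_k|.
$$

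Second, since $f_k$ takes the single value $(1/c')\,2^{dk}$ on $\Theta_k$ and vanishes elsewhere, the integral on the right is explicit:
$$
\int_{\R^n}\Phi_d(f_k) \;=\; |\Theta_k|\cdot \frac{2^{dk}}{c'}\Bigl(1+\log_+^d\bigl((1/c')\,2^{dk}\bigr)\Bigr).
$$
For $k$ sufficiently large, $(1/c')\,2^{dk}>e$, so $\log_+$ becomes $\log$; since $c'\geq 1$ and $\log 2<1$, I would then bound $\log\bigl((1/c')\,2^{dk}\bigr) = dk\log 2 - \log c' \leq dk$, and absorb the additive $1$ into the $d^d k^d$ term for large $k$. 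This yields an upper bound of the form
$$
\int_{\R^n}\Phi_d(f_k) \;\leq\; \frac{A}{c'}\, d^d k^d\, 2^{dk}\,|\Theta_k|
$$
for some absolute constant $A$ (one may take $A=2$ with a bit of care).

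Dividing the two estimates, the common factor $k^d\,2^{dk}|\Theta_k|$ cancels and what remains is a positive constant depending only on $n$ and $d$~---~of order $cc'/d^d$, hence in any case at least $\kappa(n,d)$ as defined in the statement. There is no real obstacle here: this claim is a bookkeeping step, the genuine content of the proposition being to exhibit, for each specific basis $\calR$ of interest, the pair $(\Theta_k, Y_k)$ satisfying (i)-(iii)~---~which is the task taken up in the sequel.
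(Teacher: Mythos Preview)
Your argument is correct and follows the same route as the paper: show $Y_k\subseteq\{M_\calR f_k\geq 1\}$ via (iii), bound $\int\Phi_d(f_k)$ above using $\log\bigl((1/c')2^{dk}\bigr)\leq dk$, and compare with the lower bound on $|Y_k|$ from (ii). The only wrinkle is the precise constant: you obtain a bound of order $cc'/d^d$, not $\kappa(n,d)=d^d/(cc')$~---~but the paper's own proof has the same mismatch (it shows $\int\Phi_d(f_k)\leq \kappa(n,d)|Y_k|$, which yields the reciprocal constant), so this is a typo in the stated value of $\kappa$ rather than any defect in your reasoning, and only a positive constant is needed downstream.
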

\begin{proof}[Proof of the claim]
To prove this claim, one observes that for $x\in Y_k$ we have $M_\calR f_k(x)\geq 1$ according to assumption (iii). Yet, on the other hand, one computes, for $k$ sufficiently large:
$$
\int_{\R^n} \Phi_{d}(f_k)\leq \frac{1}{c'}\cdot 2^{dk} |\Theta_k| [1+(dk\log 2)^{d}] \leq \frac{d^d}{c'}\cdot 2^{dk} k^{d} |\Theta_k|\leq
\kappa(n,d)  \cdot |Y_k|,
$$
and the claim follows.
\end{proof}

\begin{Claim}
For any $\Phi$ satisfying $\Phi=o(\Phi_{d})$ at $\infty$ and for each $C>0$, we have:
$$
\lim_{k\to\infty}\frac{\int_{\R^n} \Phi_{d}(|f_k|)}{\int_{\R^n} \Phi(C|f_k|)}=\infty.
$$
\end{Claim}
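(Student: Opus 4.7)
My plan is to exploit the fact that $f_k=(1/c')\cdot 2^{dk}\chi_{\Theta_k}$ is a scaled characteristic function, so both integrals reduce to a common factor $|\Theta_k|$ times a function of the single value $t_k:=2^{dk}/c'$. Since $\Phi_d(|f_k|)=\Phi_d(t_k)\chi_{\Theta_k}$ and $\Phi(C|f_k|)=\Phi(Ct_k)\chi_{\Theta_k}$, the ratio to investigate simplifies to
$$\frac{\int_{\R^n}\Phi_d(|f_k|)}{\int_{\R^n}\Phi(C|f_k|)}=\frac{\Phi_d(t_k)}{\Phi(Ct_k)},$$
with $t_k\to\infty$. (We may harmlessly assume $\Phi\not\equiv 0$, for otherwise the statement is vacuous; then the convexity of $\Phi$ with $\Phi(0)=0$ guarantees $\Phi(Ct_k)>0$ for $k$ large enough.)

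The key step is then a simple multiplicative splitting:
$$\frac{\Phi_d(t_k)}{\Phi(Ct_k)}=\frac{\Phi_d(Ct_k)}{\Phi(Ct_k)}\cdot\frac{\Phi_d(t_k)}{\Phi_d(Ct_k)}.$$
The first factor diverges to $+\infty$ directly from the hypothesis $\Phi=o(\Phi_d)$ at infinity, evaluated along the diverging sequence $Ct_k$. It then suffices to show that the second factor admits a positive (finite) limit.

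For the second factor, I would use the explicit formula $\Phi_d(t)=t(1+\log_+^d t)$ and notice that for $k$ large, $\log_+(Ct_k)=\log C+\log t_k$ and $\log_+(t_k)=\log t_k$. A direct computation then gives
$$\frac{\Phi_d(Ct_k)}{\Phi_d(t_k)}=C\cdot\frac{1+(\log C+\log t_k)^d}{1+(\log t_k)^d}\longrightarrow C,$$
because $(\log C+\log t_k)/\log t_k\to 1$ as $k\to\infty$. Hence $\Phi_d(t_k)/\Phi_d(Ct_k)\to 1/C>0$, and combining with the first factor the full ratio diverges.

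There is essentially no obstacle: the whole claim reduces to a pointwise comparison at a single diverging argument, thanks to $f_k$ being constant on its support. The only thing to be vigilant about is the $k$-independent constant $C$ (in particular handling the case $C<1$), which is harmlessly absorbed into the limit $1/C$ since the second factor in the splitting converges to a strictly positive finite number.
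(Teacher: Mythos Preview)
Your proof is correct and follows essentially the same approach as the paper: reduce the ratio of integrals to a ratio of values $\Phi_d(t_k)/\Phi(Ct_k)$ at the single diverging argument $t_k=2^{dk}/c'$, then split multiplicatively by inserting $\Phi_d(Ct_k)$. The paper works with the reciprocal ratio and only notes that $\Phi_d(Ct_k)/\Phi_d(t_k)$ stays bounded, whereas you compute its exact limit $C$; this is a cosmetic difference, not a substantive one.
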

\begin{proof}[Proof of the claim]
Compute for any $k$:
\begin{eqnarray*}
\frac{\int_{\R^n} \Phi(C|f_k|)}{\int_{\R^n} \Phi_{d}(|f_k|)} & = & \frac{\Phi(2^{dk}C/c')}{\Phi_{d}(2^{dk}/c')}\\
& = & \frac{\Phi( 2^{dk}C/c')}{\Phi_{d}(2^{dk}C/c')}
 \frac{\Phi_{d}( 2^{dk}C/c')}{\Phi_{d}(2^{dk}/c')},
\end{eqnarray*}
observe that the quotient $ \frac{\Phi_{d}( 2^{dk}C/c')}{\Phi_{d}(2^{dk}/c')}$ is 
bounded as $k\to\infty$ by a constant independent of $k$, while by assumption the quotient 
$\frac{\Phi( 2^{dk}C/c')}{\Phi_{d}(2^{dk}C/c')}$ tends to zero as $k\to\infty$. The claim is proved.
\end{proof}

We now finish the proof of Proposition~\ref{prop.stokn}. To this purpose, fix $\Phi$ an Orlicz function satisfying $\Phi=o(\Phi_{d})$ at $\infty$ 
and assume that there exists a constant $C>0$ such that, for any $\lambda>0$, one has:
$$
|\{M_\calR f>\lambda\}|\leq\int_{\R^n} \Phi\left(\frac{C|f|}{\lambda}\right).
$$
Using Claim~\ref{cl.Mphi}, we would then get, for each $k$ sufficiently large:
$$
0<\kappa(n,d)\int_{\R^n}\Phi_{d}(f_k)\leq \left|\left\{M_\calR f_k> \frac 12 \right\}\right|\leq \int_{\R^n} \Phi({2Cf_k}),
$$ contradicting the previous claim and proving the proposition.
\end{demo}

A first situation to which the previous proposition can be applied concerns some families of rectangles containing arbitrary large subsets of rectangles having the same $n$-dimensional measure.
\subsection{A first example}
We shall need the following Lemma. 
\begin{Lemma} \label{LEMMA1}
Fix $\alpha >0$. For each $k \in \N\setminus\{0\}$, let $\bbD_k:=\{2^{-j}:0\leq j\leq k\}$ and define
$${\calR}_{k} : =  \left\{[0, s_{1}] \times \ldots \times  [0, s_{n-1}]  \times \left[0, \frac{\alpha}{s_{1} \cdot \ldots \cdot s_{n-1}}\right]: 
s_{1}, \ldots, s_{n-1} \in \bbD_k\right\}.$$
Then 
$$\vert \cup{\calR}_{k} \vert \geq \frac{1}{ 3 \cdot 2^{n-2} }{k}^{n-1} {\alpha}.$$
\end{Lemma}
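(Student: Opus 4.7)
The plan is to compute $|\cup\calR_k|$ essentially exactly, by Fubini in the last coordinate, exploiting the product structure of the family. Every rectangle in $\calR_k$ has its $n$-volume equal to $\alpha$ and sits in the ``quadrant'' $[0,1]^{n-1}\times [0,\infty)$, sharing the corner $0$. Parametrise rectangles by $j_i\in\{0,1,\dots,k\}$ via $s_i=2^{-j_i}$, so that the rectangle becomes $R_{(j_1,\dots,j_{n-1})}=[0,2^{-j_1}]\times\cdots\times[0,2^{-j_{n-1}}]\times[0,\alpha\cdot 2^{j_1+\cdots+j_{n-1}}]$.

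For a point $(x_1,\dots,x_{n-1})\in (0,1]^{n-1}$, let $b_i(x_i):=\min\bigl(\lfloor\log_2(1/x_i)\rfloor,k\bigr)$; then the set of admissible $j_i$'s with $x_i\leq 2^{-j_i}$ and $j_i\leq k$ is $\{0,1,\dots,b_i(x_i)\}$, which is nonempty. Hence the $x_n$-slice of $\bigcup\calR_k$ through $(x_1,\dots,x_{n-1})$ equals $\bigl(0,\alpha\cdot 2^{b_1(x_1)+\cdots+b_{n-1}(x_{n-1})}\bigr]$, obtained by choosing each $j_i$ maximal. Points with some $x_i\notin (0,1]$ contribute nothing. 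Fubini then gives
$$
|\cup\calR_k|=\alpha\int_0^1\!\!\cdots\!\!\int_0^1 \prod_{i=1}^{n-1} 2^{b_i(x_i)}\,dx_1\cdots dx_{n-1}=\alpha\left(\int_0^1 2^{b(x)}\,dx\right)^{\!n-1},
$$
where the product structure of the integrand, which comes from the independence of the parameters $j_1,\dots,j_{n-1}$, is the crucial observation.

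The one-dimensional integral is a straightforward layer-cake: on $(2^{-j-1},2^{-j}]$ ($0\leq j\leq k-1$) one has $b(x)=j$ and the interval has length $2^{-j-1}$, contributing $\tfrac 12$; on $(0,2^{-k}]$ one has $b(x)=k$, contributing $2^k\cdot 2^{-k}=1$. Summing gives $\int_0^1 2^{b(x)}\,dx=1+k/2\geq k/2$, whence
$$
|\cup\calR_k|\geq \alpha\left(\frac{k}{2}\right)^{n-1}=\frac{\alpha\,k^{n-1}}{2^{n-1}}\geq\frac{\alpha\,k^{n-1}}{3\cdot 2^{n-2}}.
$$

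There is no real obstacle here: the only thing to be a bit careful about is the discretisation (i.e.\ writing $b_i$ correctly and verifying the endpoints of the dyadic intervals $(2^{-j-1},2^{-j}]$), together with noting that the slice formula above uses only that $\{j\in\N:2^{-j}\geq x\}\cap [0,k]$ is an initial segment, so that maximising $j_1+\cdots+j_{n-1}$ componentwise is legitimate.
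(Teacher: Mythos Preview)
Your proof is correct and in fact yields the exact identity $|\cup\calR_k|=\alpha(1+k/2)^{n-1}$, which is sharper than the stated bound. Your approach is genuinely different from the paper's: the paper argues by induction on $n$, citing an external reference for the base case $n=2$ and, in the inductive step, slicing $\cup\calR_k$ along the \emph{first} coordinate into the bands $E_j=[2^{-j-1},2^{-j}]\times[0,1]^{n-1}$, fixing $s_1=2^{-j}$ on $E_j$, and then applying the $(n-1)$-dimensional estimate to the projection on the last $n-1$ coordinates. You instead slice in the \emph{last} coordinate and observe that the length of the $x_n$-fibre factorises as a product over the first $n-1$ variables, reducing everything to a single one-dimensional integral computed explicitly. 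The advantages of your route are that it is self-contained (no induction, no external base case) and gives the exact value; the paper's induction is perhaps more in the spirit of the later arguments in the section, but at the cost of a weaker constant and an appeal to \cite[Claim~15]{MR2012}.
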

\begin{demo}
Fix $k \in {\Bbb N}$. We prove this claim by induction on the dimension of the space $n$, and we observe that it follows from Moonens and 
Rosenblatt \cite[Claim~15]{MR2012} that the inequality holds true in dimension $n=2$.

Assume now that it holds in dimension $n-1$ and let us prove it in dimension $n$. To that purpose, define $E_j:=[2^{-j-1},2^{-j}]\times [0,1]^{n-1}$ 
for each $0\leq j\leq k$ and observe that one has:
$$
|\cup\calR_k|\geq\sum_{j=0}^k \left| (\cup\calR_k)\cap E_j\right|.
$$
Denoting by $s_1(R)$ the length of the first side of $R\in\calR_k$, and by $p_1':\R^n\to\R^{n-1}$ the projection on the $n-1$ last coordinates {defined by 
$p_1'(x_1,\dots,x_n)=(x_2,\dots,x_n)$, we} obviously have, for $0\leq j\leq k$:
$$
(\cup\calR_k)\cap E_j\supseteq (\cup\{R\in\calR_k:s_1(R)=2^{-j}\})\cap E_j.
$$
Since we compute, for the same $j$:
$$
|(\cup\{R\in\calR_k:s_1(R)=2^{-j}\})\cap E_j|= 2^{-j-1} |\cup\{p_1'(R):R\in\calR_k, s_1(R)=2^{-j}\}|,
$$
and since we have:
\begin{eqnarray*}
\calR'_k&:=&\cup\{p_1'(R):R\in\calR_k, s_1(R)=2^{-j}\}\\&=&\left\{ [0,s_2]\times \cdots\times 
[0,s_{n-1}]\times \left[0, \frac{2^j\alpha}{s_2\cdot \cdots s_{n-1}}\right]: s_2,\dots, s_{n-1}\in\calD_k\right\},
\end{eqnarray*}
the induction hypothesis, applied to $\calR'_k$ and $2^j$, yields:
$$
|\cup\{p_1'(R):R\in\calR_k, s_1(R)=2^{-j}\}|\geq \frac{1}{3\cdot 2^{n-3}} 2^j k^{n-2} \alpha.
$$
Now we compute:
$$
|\cup\calR_k|\geq  \frac{1}{3\cdot 2^{n-3}}  k^{n-2}\alpha \sum_{j=0}^k 2^{-j-1}\cdot 2^j\geq 
\frac{1}{3\cdot 2^{n-2}}  k^{n-1} \alpha;
$$
the proof is complete.
\end{demo}

Using the previous estimate, we can show the following theorem.
\begin{Theorem}\label{thm.stokn2}
Let $\calR:=\bigcup_{k\in\N}\calR_k$, where for each $k\in\N$, $\calR_k$ is defined as in the statement of Lemma~\ref{LEMMA1} with $\alpha$ replaced by $\alpha_k:=2^{-nk}$. Under those conditions, de Guzm\'an's $L\log^{n-1}L$ weak estimate for the maximal operator $M_\calR$ is optimal in the following sense: if $\Phi$ is an Orlicz function satisfying $\Phi=o(\Phi_{n-1})$ at $\infty$, then $M_\calR$ does \emph{not} satisfy a weak $L^\Phi$ estimate.
\end{Theorem}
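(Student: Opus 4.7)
The plan is to apply Proposition~\ref{prop.stokn} with $d=n-1$, reducing the problem to producing, for each sufficiently large $k$, explicit sets $\Theta_k\subseteq Y_k\subseteq\R^n$ fulfilling the three conditions (i)--(iii). The natural choices are $Y_k:=\bigcup\calR_k$ and $\Theta_k:=[0,2^{-k}]^{n-1}\times [0,2^{-nk}]$, the ``intersection-style'' small rectangle sitting at the origin.

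First I would verify that $\Theta_k$ is contained in \emph{every} $R\in\calR_k$. Writing a typical element as $R_{j_1,\dots,j_{n-1}}=[0,2^{-j_1}]\times\cdots\times [0,2^{-j_{n-1}}]\times [0,2^{j_1+\cdots+j_{n-1}-nk}]$ with each $0\leq j_i\leq k$, the inclusion $[0,2^{-k}]\subseteq [0,2^{-j_i}]$ follows from $j_i\leq k$, and $[0,2^{-nk}]\subseteq [0,2^{j_1+\cdots+j_{n-1}-nk}]$ follows from $j_1+\cdots+j_{n-1}\geq 0$. In particular $\Theta_k\subseteq R_{0,\dots,0}\subseteq Y_k$, establishing (i), and for any $x\in Y_k$ there exists some $R\in\calR_k$ with $x\in R$ (no nontrivial translation needed, since every element of $\calR_k$ already contains the origin), so that
$$
M_\calR\chi_{\Theta_k}(x)\geq \frac{|R\cap\Theta_k|}{|R|}=\frac{|\Theta_k|}{|R|}=\frac{2^{-(2n-1)k}}{2^{-nk}}=2^{-(n-1)k},
$$
yielding (iii) with $c'=1$.

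It remains to check (ii). By Lemma~\ref{LEMMA1} applied with $\alpha=\alpha_k=2^{-nk}$, one has $|Y_k|\geq \frac{1}{3\cdot 2^{n-2}}k^{n-1}\cdot 2^{-nk}$, while $|\Theta_k|=2^{-(2n-1)k}$, so that
$$
\frac{|Y_k|}{|\Theta_k|}\geq \frac{k^{n-1}\cdot 2^{(n-1)k}}{3\cdot 2^{n-2}},
$$
which gives (ii) with $c:=\frac{1}{3\cdot 2^{n-2}}$. Invoking Proposition~\ref{prop.stokn} with $d=n-1$ then concludes the proof. No genuine obstacle is expected here: the whole argument rests on the geometric Lemma~\ref{LEMMA1} (whose proof is already in the excerpt) together with the observation that, because all rectangles in $\calR_k$ have a common corner, a single small rectangle sitting at that corner is simultaneously contained in each of them with the right density.
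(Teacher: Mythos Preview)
Your proof is correct and essentially identical to the paper's: the paper likewise takes $Y_k:=\bigcup\calR_k$ and $\Theta_k:=\bigcap\calR_k$ (which is precisely your rectangle $[0,2^{-k}]^{n-1}\times[0,2^{-nk}]$), verifies (i)--(iii) by the same computations, and then invokes Proposition~\ref{prop.stokn} with $d=n-1$.
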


It is clear, according to Proposition~\ref{prop.stokn}, that in order to prove Theorem~\ref{thm.stokn2} we simply need to show the following lemma.
\begin{Lemma}\label{lem.stokn2}
Let $\calR:=\bigcup_{k\in\N}\calR_k$, where for each $k\in\N$, $\calR_k$ is defined as in the statement of Lemma~\ref{LEMMA1} with $\alpha$ replaced by $\alpha_k:=2^{-nk}$. Under those assumptions, for each $k\in\N$, there exist sets $\Theta_k\subseteq [0,1]^n$ and $ Y_k\subseteq [0,1]^n$ satisfying the following properties:
\begin{enumerate}
\item[(i)] $\Theta_k\subseteq Y_k$;
\item[(ii)] $|Y_k|\geq  \frac{1}{3\cdot 2^{n-2}}\cdot 2^{(n-1)k} k^{n-1} |\Theta_k|$;
\item[(iii)] for each $x\in Y_k$, one has $M_\calR \chi_{\Theta_k}(x)\geq  2^{(1-n)k}$.
\end{enumerate}
\end{Lemma}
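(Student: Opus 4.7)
My plan is to let $Y_k := \bigcup\calR_k$ and choose $\Theta_k$ to be the common ``corner'' $\bigcap_{R\in\calR_k} R$ of all rectangles in $\calR_k$. A short coordinate-by-coordinate computation will identify this intersection with $[0,2^{-k}]^{n-1}\times[0,\alpha_k]$: for $i\leq n-1$, the side $s_i$ is minimized by picking $s_i=2^{-k}$, the smallest element of $\bbD_k$, while the last side $s_n=\alpha_k/(s_1\cdots s_{n-1})$ is minimized by taking $s_1=\cdots=s_{n-1}=1$, giving $\alpha_k$. In particular $|\Theta_k|=2^{-(2n-1)k}$. Property (i) is then immediate, since choosing $s_1=\cdots=s_{n-1}=2^{-k}$ in the definition of $\calR_k$ forces $s_n=\alpha_k/2^{-(n-1)k}=2^{-k}$, so $[0,2^{-k}]^n\in\calR_k$ and obviously contains $\Theta_k$; hence $\Theta_k\subseteq Y_k$.

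For (ii) I will invoke Lemma~\ref{LEMMA1} with $\alpha=\alpha_k=2^{-nk}$, which yields $|Y_k|\geq (3\cdot 2^{n-2})^{-1}k^{n-1}\alpha_k$; dividing by $|\Theta_k|=2^{-(2n-1)k}$ produces the extra factor $\alpha_k/|\Theta_k|=2^{(n-1)k}$, giving exactly the required bound $|Y_k|\geq (3\cdot 2^{n-2})^{-1}\cdot 2^{(n-1)k}k^{n-1}|\Theta_k|$.

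For (iii), any $x\in Y_k$ lies in some untranslated $R\in\calR_k$, and $\Theta_k\subseteq R$ by construction; testing the supremum defining $M_\calR$ against $R$ itself will then give
$$
M_\calR\chi_{\Theta_k}(x)\;\geq\;\frac{|R\cap\Theta_k|}{|R|}\;=\;\frac{|\Theta_k|}{\alpha_k}\;=\;2^{(1-n)k}.
$$
The only substantive point is the identification of $\Theta_k$: its volume is calibrated so that the loss $|\Theta_k|/\alpha_k$ in (iii) exactly balances the gain $\alpha_k/|\Theta_k|$ in (ii), which is why all powers of $2$ line up without slack. Once this choice is made, the verifications reduce to direct calculation together with the prior volume bound of Lemma~\ref{LEMMA1}, and I do not anticipate any genuine obstacle.
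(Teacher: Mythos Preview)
Your proposal is correct and follows exactly the same approach as the paper: set $\Theta_k:=\bigcap\calR_k$ and $Y_k:=\bigcup\calR_k$, compute $|\Theta_k|=2^{(1-n)k}\alpha_k=2^{(1-2n)k}$, invoke Lemma~\ref{LEMMA1} for (ii), and for (iii) use that every $R\in\calR_k$ has $|R|=\alpha_k$ and contains $\Theta_k$. The only cosmetic difference is that the paper dispatches (i) in one line via $\bigcap\calR_k\subseteq\bigcup\calR_k$, whereas you route through the particular rectangle $[0,2^{-k}]^n$; both are fine.
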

\begin{demo}
To prove this lemma, fix $k\in\N$ and let $\Theta_k:=\cap\calR_k$ and $Y_k:=\cup\calR_k$, so that (i) is obvious. One also observes that $\Theta_k$ is a rectangle whose measure satisfies $|\Theta_k|=2^{(1-n)k}\alpha_k=2^{(1-2n)k}$.
According to Lemma~\ref{LEMMA1}, we get:
$$
|Y_k|\geq\frac{1}{3\cdot 2^{n-2}} k^{n-1} 2^{-nk}=\frac{1}{3\cdot 2^{n-2}}  2^{(n-1)k} k^{n-1} \cdot 2^{(1-2n)k}=\frac{1}{3\cdot 2^{n-2}}\cdot 2^{(n-1)k} k^{n-1} |\Theta_k|,
$$
which completes the proof of (ii).\\

To show (iii), observe that, for $x\in Y_k$, there exists $R\in\calR_k$ such that one has $x\in R$; since we have $\Theta_k\subseteq R$, this yields:
$$
M_\calR\chi_{\Theta_k}(x)\geq \frac{|\Theta_k\cap R|}{|R|}=\frac{|\Theta_k|}{|R|}=2^{(1-n)k},
$$
and so (iii) is proved.
\end{demo}

Another interesting situation can be dealt with according to Proposition~\ref{prop.stokn}.
\subsection{Another series of examples after Stokolos' study of Soria bases in $\R^3$ \cite{STOKOLOS2008}}

In this section, we write $R\prec R'$ for two standard dyadic rectangles $R=\prod_{i=1}^n [0,a_i]$ and $R'=\prod_{i=1}^n [0,b_i]$ in case one has $a_i<b_i$ for all $1\leq i\leq n$. A \emph{strict chain} of dyadic rectangles is then a finite family  of standard dyadic rectangles, for which one has $R_0\prec R_1\prec\cdots\prec R_k$.

Given $\calC=\{R_0,\dots,R_k\}$ ($k\in\N$) a strict chain of standard dyadic rectangles (say that one has $R_0\prec R_1\prec\cdots\prec R_k$), we denote by $\widehat{\calC}$ the family of standard dyadic rectangles obtained as intersections $\bigcap_{i=1}^n R^i_{j_i}$, where $k\geq j_1\geq\cdots\geq j_n\geq 0$ is a nonincreasing sequence of integers, and where we let, for $1\leq i\leq n $ and $0\leq j\leq k$:
$$
R^i_j:=\begin{cases}R_j&\text{if }i=1,\\p_{1,\dots, i-1}(R_k)\times p_{i,i+1,\dots, n}(R_j)&\text{if }i\geq 2,\end{cases}
$$
with $p_{1,\dots, i-1}$ (resp. $p_{i,i+1,\dots n}$) denoting the projection on the $i-1$ first (resp. $n-i+1$ last) coordinates.

{}{On Figure~\ref{fig.1}, we represent a strict chain of rectangles $\calC$ and the 6 rectangles belonging to the associated family $\widehat{\calC}$; for the sake of clarity, we give a picture of each element of $\widehat{\calC}$ in Figure~\ref{fig.2}.}

\begin{figure}[h]
\includegraphics[width=6cm]{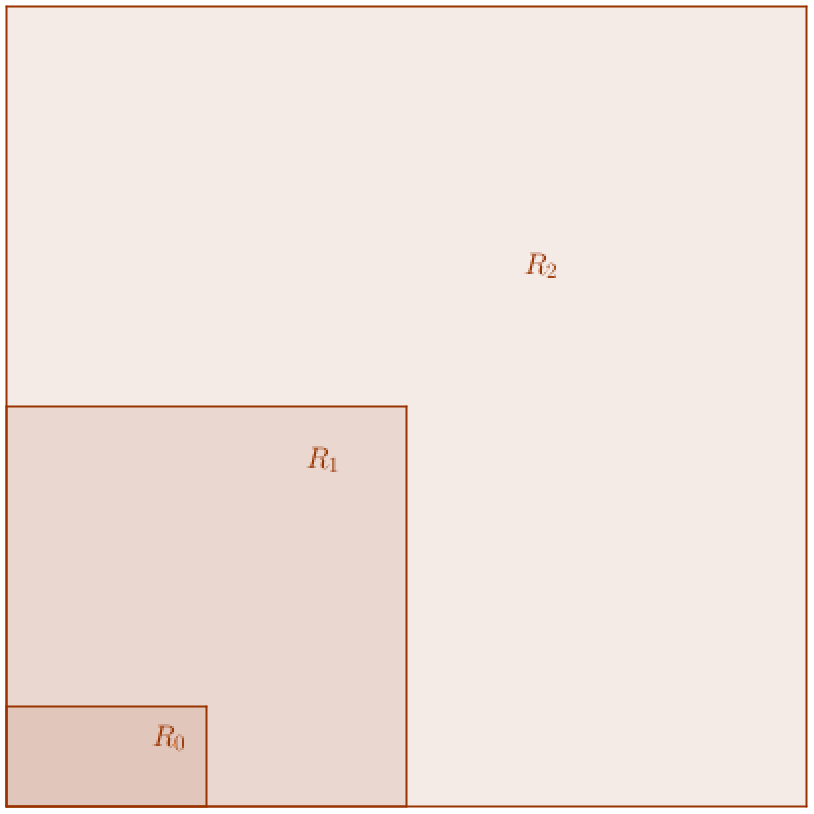}~~~~\includegraphics[width=6cm]{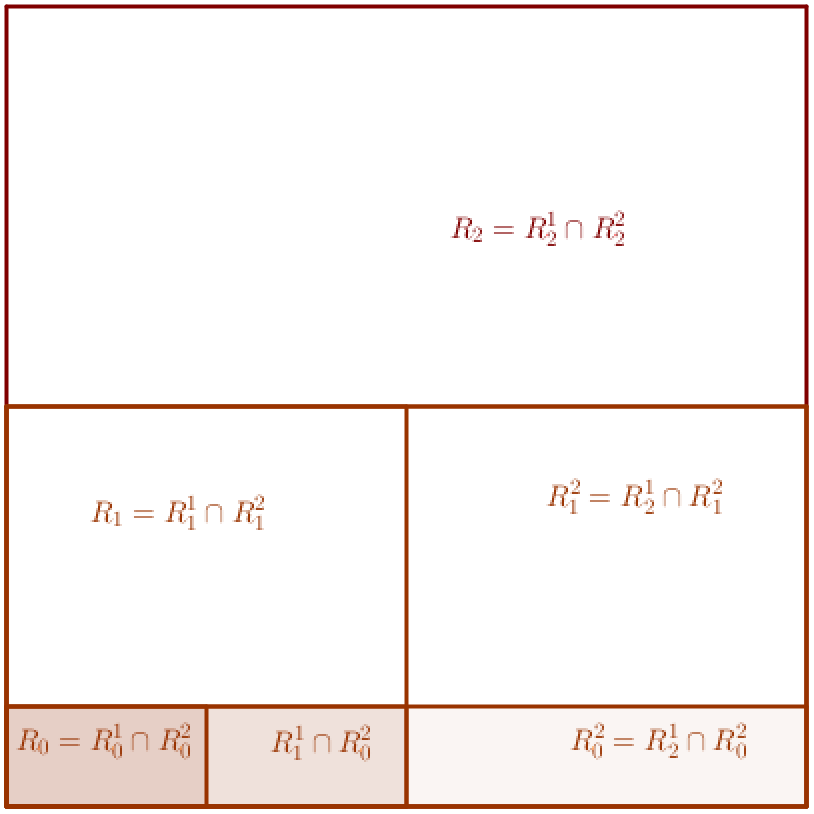}\\
\caption{A strict chain of dyadic rectangles $\calC=\{R_0,R_1,R_2\}$ (left) and the associated family $\widehat{\calC}$ (right).}\label{fig.1}
\end{figure}

\begin{figure}[h]
\includegraphics[width=6cm]{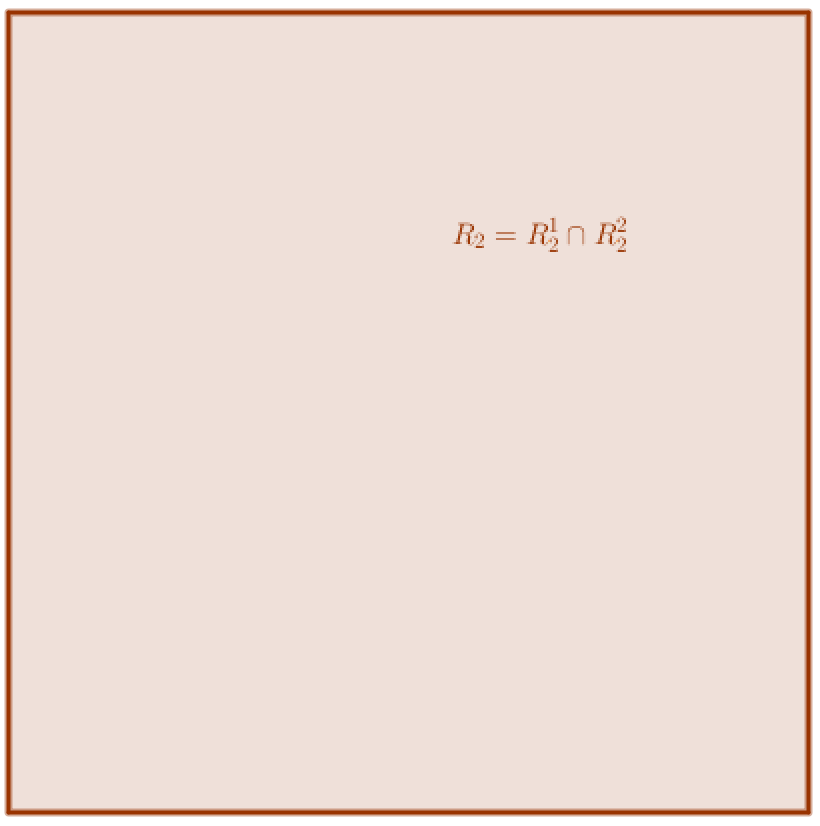}~~~~\includegraphics[width=6cm]{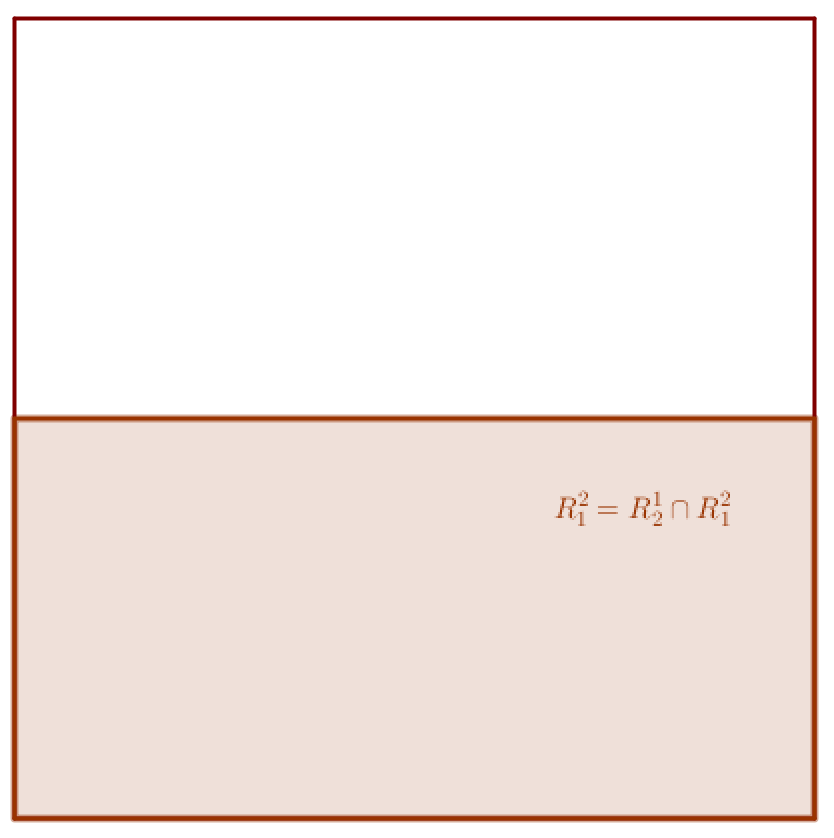}\\
 \includegraphics[width=6cm]{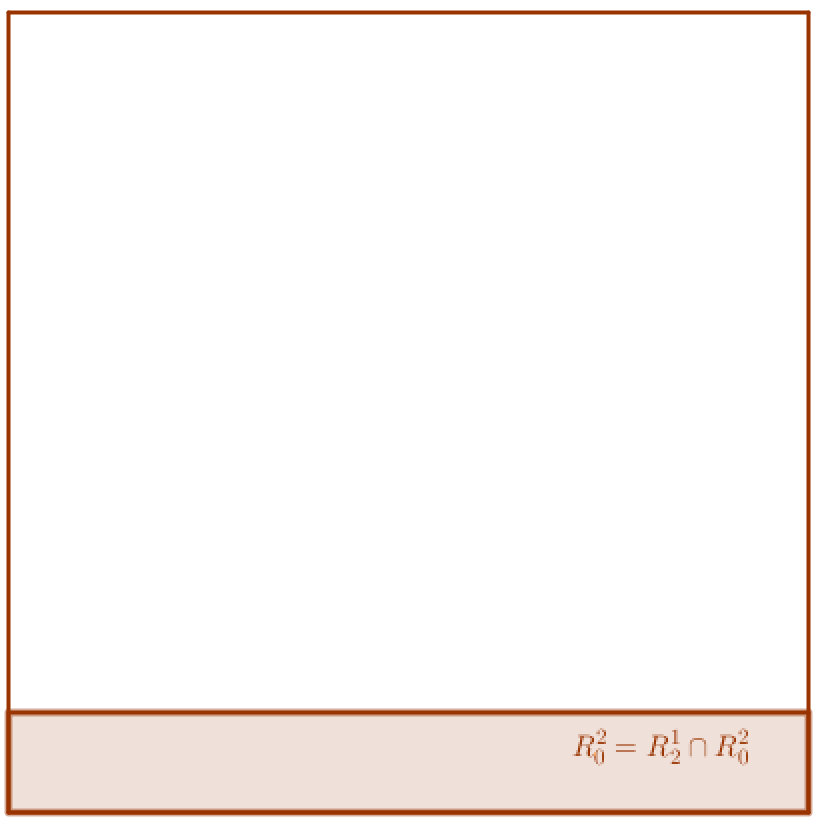}~~~~\includegraphics[width=6cm]{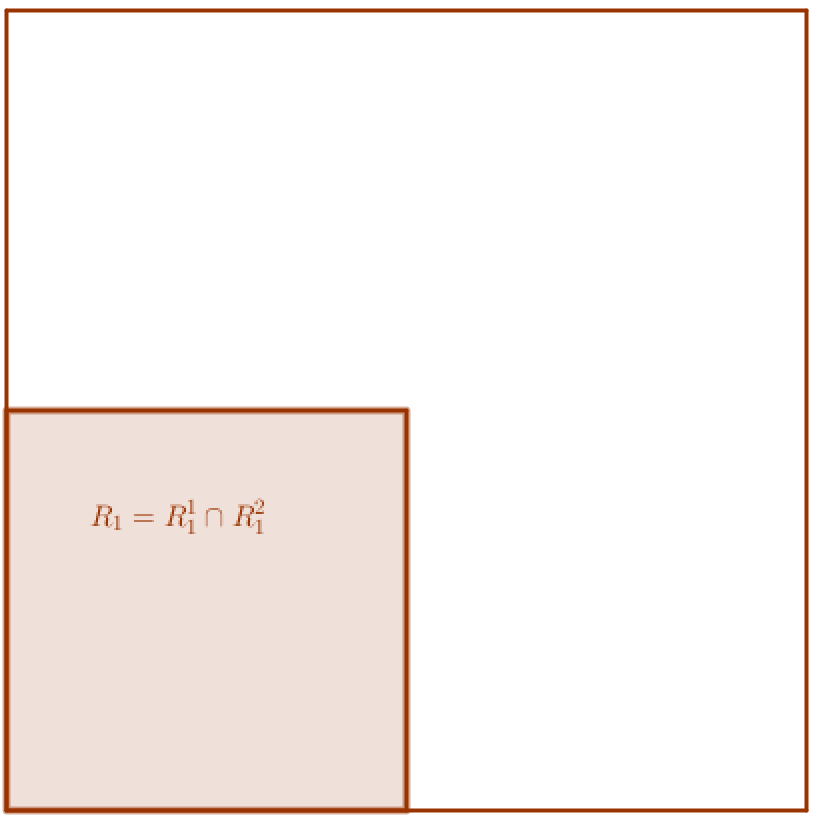}\\
\includegraphics[width=6cm]{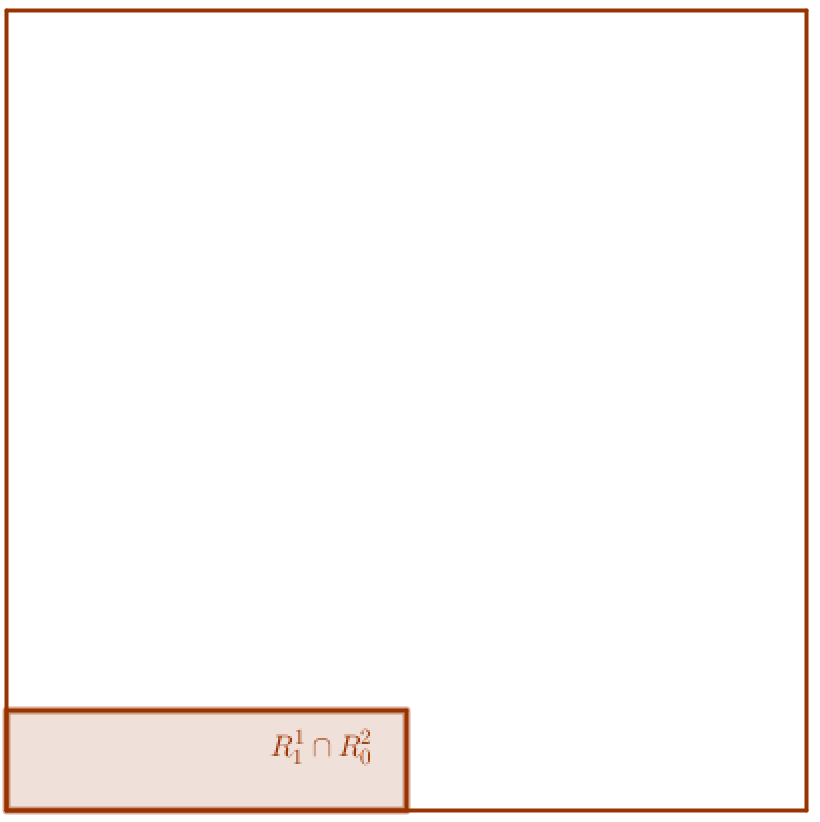}~~~~\includegraphics[width=6cm]{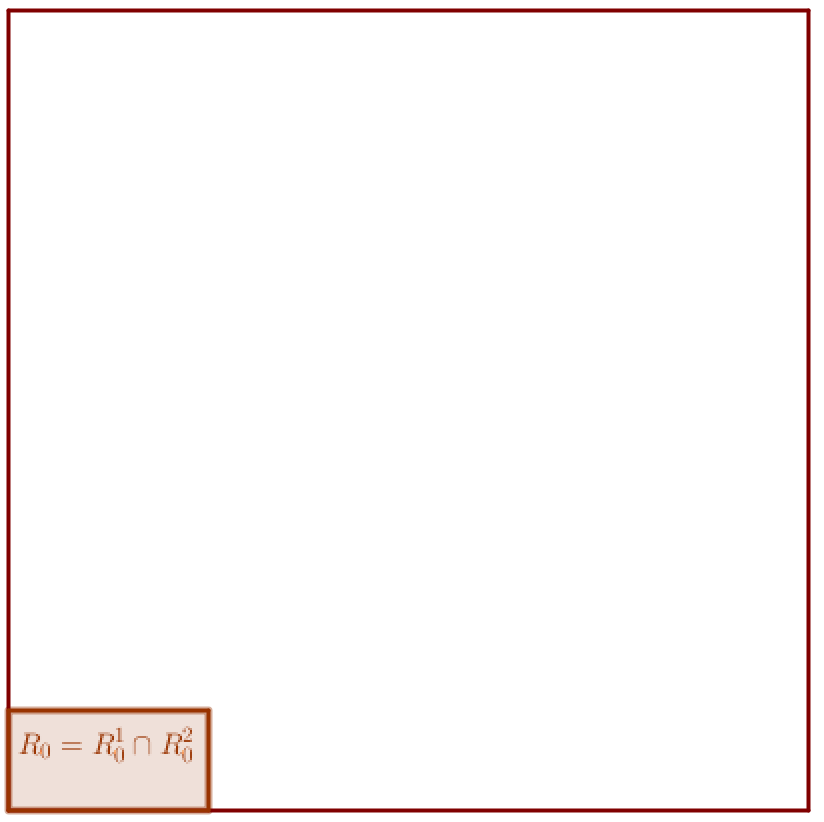}\\
\caption{The rectangles belonging to $\widehat{\calC}$ for $\calC=\{R_0,R_1,R_2\}$ as in Figure~\ref{fig.1}.}\label{fig.2}
\end{figure}
\newpage

\begin{Theorem}\label{thm.stokn}
Assume that $\calR$ is a family of standard, dyadic rectangles in $\R^n$ having the following property: for all $k\in\N$, there exist a dyadic number $\alpha\leq 2^{-k-1}$ and a strict chain $\calC$ of standard dyadic rectangles in $\R^{n-1}$ satisfying $\#\calC=k+1$ and such that one has:
$$
\calR\supseteq \left\{R\times [0,\alpha]: R\in\widehat{\calC}\right\}.
$$
Then de Guzm\'an's $L\log^{n-1}L$ weak estimate for the maximal operator $M_\calR$ is optimal in the 
following sense: if $\Phi$ is an Orlicz function satisfying $\Phi=o(\Phi_{n-1})$ at $\infty$, then $M_\calR$ does \emph{not} satisfy a weak $L^\Phi$ estimate.
\end{Theorem}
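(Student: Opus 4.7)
My plan is to apply Proposition~\ref{prop.stokn} with $d=n-1$; the theorem then reduces to constructing, for each sufficiently large $k$, sets $\Theta_k\subseteq Y_k\subset\R^n$ such that $|Y_k|\geq c(n)\cdot 2^{(n-1)k}k^{n-1}|\Theta_k|$ and $M_\calR\chi_{\Theta_k}\geq c'(n)\cdot 2^{-(n-1)k}$ throughout $Y_k$.

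Fix $k$ large, and let $\calC=\{R_0\prec\cdots\prec R_k\}$ and $\alpha\leq 2^{-k-1}$ be as furnished by the hypothesis. Writing $R_j=\prod_{l=1}^{n-1}[0,2^{-m_{j,l}}]$ with $m_{j,l}>m_{j+1,l}$ in each $l$, a direct unfolding of the definition identifies $\widehat{\calC}$ with the $\binom{k+n-1}{n-1}$ rectangles $P_{\vec j}=\prod_l[0,2^{-m_{j_l,l}}]$ indexed by the non-increasing tuples $k\geq j_1\geq\cdots\geq j_{n-1}\geq 0$, each contributing a member $P_{\vec j}\times[0,\alpha]\in\calR$.

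I would follow (and extend to arbitrary $n$) the Rademacher-type paradigm used by Stokolos in \cite{STOKOLOS1988} and \cite{STOKOLOS2008}. In each of the first $n-1$ coordinate directions $l$, I place Rademacher-type functions at the $k$ scales $2^{-m_{0,l}},\dots,2^{-m_{k-1,l}}$ appearing in the $l$-th slot of the chain, and define $\Theta_k$ (inside the envelope $R_k\times[0,\alpha]$) as the $1$-level set of a suitably chosen product of these factors. For every admissible $\vec j$ one can then select a translate $\tau_{\vec j}(P_{\vec j}\times[0,\alpha])$ on which $\chi_{\Theta_k}$ has average at least $c'2^{-(n-1)k}$, thanks to the independence of distinct Rademacher functions (the identity $\int_I\prod r_{i_l}=2^{-k}|I|$ recalled in the preliminaries); $Y_k$ is the union of all such translates. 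Condition~(iii) then holds by construction, while condition~(ii) should follow from combining the $2^{(n-1)k}$ gain produced by the Rademacher sparsification along the $n-1$ coordinate directions with the combinatorial factor $\binom{k+n-1}{n-1}\sim k^{n-1}/(n-1)!$ coming from the number of admissible multi-indices.

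The hardest part is the verification of condition~(ii). Two difficulties conspire: first, the $\vec j$'s are forced to be non-increasing, so $\widehat{\calC}$ is not a full tensor-product family, and the translates $\tau_{\vec j}$ must be chosen carefully to make their union comparable to $\sum_{\vec j}|P_{\vec j}\times[0,\alpha]|$ rather than just $|R_k\times[0,\alpha]|$ (since the rectangles of $\widehat{\calC}$ all share the origin as a common vertex, their \emph{untranslated} union collapses to the envelope $R_k$); second, when the chain's coordinate heights $m_{0,l}-m_{k,l}$ strictly exceed $k$, the ratio $|R_k|/|R_0|$ is much larger than $2^{(n-1)k}$, so the trivial candidate $\Theta_k=R_0\times[0,\alpha]$ from the proof of Theorem~\ref{thm.stokn2} no longer produces the required density in the largest rectangles $P_{\vec j}\times[0,\alpha]$. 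Both obstructions should be handled by tuning the Rademacher construction to the precise scales dictated by the chain, and by a counting/volume argument in the spirit of Lemma~\ref{LEMMA1} that extracts the $k^{n-1}$ factor from the number of admissible multi-indices.
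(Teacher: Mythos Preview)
Your overall strategy---reduce to Proposition~\ref{prop.stokn} with $d=n-1$ and build $\Theta_k$, $Y_k$ from Rademacher-type products at the chain scales---is exactly the paper's approach. But your sketch has a genuine gap in the construction, and it stems from restricting the Rademacher structure to the first $n-1$ coordinates.

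The problem is visible already in your proposed bookkeeping for condition~(ii). You aim for $|Y_k|$ comparable to $\sum_{\vec j}|P_{\vec j}\times[0,\alpha]|$, but that target is \emph{too small}: the rectangles $P_{\vec j}$ have wildly different volumes (from $|R_0|$ up to $|R_k|$), and the sum is dominated by the few $\vec j$ near $(k,\dots,k)$, giving only $O(|R_k|\alpha)$ rather than the required $k^{n-1}|R_k|\alpha$. Taking a single translate per $\vec j$ can therefore never produce the $k^{n-1}$ factor. Relatedly, if you define the pieces $Y_J$ by Rademacher products in only $n-1$ directions, the number of factors in $Y_J$ is $k-j_{n-1}+(n-1)$, which depends on $J$; the measures $|Y_J|$ are then unbalanced and their sum again collapses.

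The paper's fix is to use the $n$-th coordinate as well: write $\alpha=2^{-p}$ with $p\geq k+1$, set $m^n_j:=p-j$, and put $k$ Rademacher factors in the last variable too (this is exactly why the hypothesis demands $\alpha\leq 2^{-k-1}$). Then $\Theta_k$ carries $nk$ factors and each $Y_J$ carries the \emph{same} number of factors (the telescoping $\sum_{i=1}^n(j_{i-1}-j_i+1)=k+n$ uses $j_0=k$ and $j_n=0$), so $|Y_J|$ is independent of $J$ and $\sum_J|Y_J|\asymp k^{n-1}|Y_J|$. Condition~(ii) then follows from a layer-by-layer almost-disjointness estimate (Claim~\ref{cl.E} in the paper): at each level $r$ one shows $|\bigcup_j E_{\dots,j}|\geq\tfrac12\sum_j|E_{\dots,j}|$ by observing that the overlap is trapped inside a set carrying one extra Rademacher factor $r_{m^{r-1}_{j-1}}(x_{r-1})$. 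Iterating $n-1$ times costs a factor $2^{-(n-1)}$, which is harmless. Condition~(iii) is then immediate from periodicity, since each $Y_J$ is a disjoint union of translates of $P_J\times[0,\alpha]$ and the ratio $|\Theta_k|/|Y_J|$ is exactly $2^{n-1}\cdot 2^{-(n-1)k}$.
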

\begin{Remark}\label{rmk.stok2008}
The preceeding theorem includes as a particular case Stokolos' result \cite{STOKOLOS2008} about Soria bases in $\R^3$. 
More precisely, Stokolos there shows an estimate similar to the one we get in Claim~\ref{cl.Mphi} above with $l=n-1$, for families of standard dyadic rectangles in $\R^3$ of the form:
$$
\calR=\{R\times I: R\in\calR', I\in\calI_d\},
$$
where $\calI_d$ is the family of all dyadic intervals, and where $\calR'$ is a family of standard dyadic rectangles in $\R^2$ containing arbitrary 
large (in the sense of cardinality) finite subsets $\calR''$ having the following property:
\begin{itemize}
\item[(is)] for all $R,R'\in\calR''$ with $R\neq R'$, we have $R\nsim R'$ and $R\cap R'\in\calR'$;
\end{itemize}
here we mean by $R\nsim R'$ that neither $R\subseteq R'$ nor $R'\subseteq R$ holds.

We now show that our Theorem~\ref{thm.stokn} applies to this situation. For a given $k\in\N$, we denote by $\calR'_k$ a subfamily of 
$\calR'$ satisfying $\# \calR_k'=2k+1$ as well as property (is), and we write $\calR'_k=\{[0,\alpha_j]\times [0,\beta_j]: 0\leq j\leq 2k\}$ 
with $\alpha_0<\alpha_1<\cdots<\alpha_{2k}$. Then, we have $\beta_0>\beta_1>\cdots >\beta_{2k}$ (using the pairwise incomparability of elements in $\calR_k'$). 
Now let $\alpha^1_j:=\alpha_j$ and $\alpha^2_j:=\beta_{2k-j}$ for $0\leq j\leq k$, define for $0\leq j\leq k$:
$$
R_j:=[0,\alpha_j^1]\times [0,\alpha_j^2],
$$
and observe that one has $R_0\prec R_1\prec\cdots\prec R_k$. Letting $\calC:=\{R_0,\dots,R_k\}$ we see that for $k\geq j_1\geq j_2\geq 0$ we have, using property (is):
\begin{multline*}
R^1_{j_1}\cap R^2_{j_2}=R_{j_1}\cap [p_{1}(R_{k})\times p_2(R_{j_2})]=([0,\alpha^1_{j_1}]\times [0,\alpha^2_{j_1}])\cap ([0,\alpha^1_k]\times [0,\alpha^2_{j_2}])\\=
[0,\alpha_{j_1}^1]\times [0,\alpha^2_{j_2}]=
([0, \alpha_{j_1}]\times [0,\beta_{j_1}])\cap
([0, \alpha_{2k-j_2}]\times [0, \beta_{2k-j_2}])\in\calR'.
\end{multline*}
Hence we get $\calR\supseteq \{R\times [0,2^{-k-1}]:R\in\widehat{\calC}\}$,
and it is now clear that the hypotheses of Theorem~\ref{thm.stokn} are satisfied.
\end{Remark}

\begin{Remark}\label{rmk.stok3}
Assume that $k\in\N$ is an integer and that $\calC=\{R_0,\dots, R_k\}$ is a strict chain of standard dyadic rectangles in $\R^n$ with $R_0\prec R_1\prec\cdots\prec R_k$. Write, for each $0\leq j\leq k$, $R_j:=\prod_{i=1}^n [0,\alpha^i_j]$. Observe now that, for $k\geq j_1\geq\cdots\geq j_n\geq 0$, one computes:
$$
\prod_{i=1}^n [0,\alpha^i_{j_i}] =\bigcap_{i=1}^n R^i_{j_i}\in \widehat{\calC}.
$$
\end{Remark}

In order to prove Theorem~\ref{thm.stokn}, it is sufficient, according to Proposition~\ref{prop.stokn}, to prove the following lemma, improving on Stokolos techniques in \cite{STOKOLOS1988} and \cite{STOKOLOS2008}, and 
using Radema\-cher functions as in \cite{MR2012}.
\begin{Lemma}\label{lem.stokn}
Assume the family $\calR$ satisfies the hypotheses of Theorem~\ref{thm.stokn}. Then, for each $k\geq 2n-6$, there exist
sets $\Theta_k$ and $Y_k$ satisfying the following conditions:
\begin{enumerate}
\item[(i)] $\Theta_k  \subseteq Y_k$;
\item[(ii)] $|Y_k|\geq  \frac{2^{5-3n}}{(n-1)!} \cdot 2^{(n-1)k} k^{n-1} |\Theta_k|$;
\item[(iii)] for each $x\in Y_k$, one has $M_\calR \chi_{\Theta} (x)\geq 2^{n-1}\cdot 2^{(1-n)k}$.
\end{enumerate}
\end{Lemma}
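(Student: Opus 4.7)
The proof proposal adapts Stokolos's construction for Soria bases in $\R^3$ \cite{STOKOLOS2008} to general dimension $n$, in the spirit of the Rademacher-function approach of Moonens–Rosenblatt \cite{MR2012}. After rescaling each of the first $n-1$ coordinates we may assume $\alpha^i_k = 1$ for $1 \leq i \leq n-1$, and work in the ambient box $\Omega := [0,1]^{n-1} \times [0, \alpha]$. By Remark~\ref{rmk.stok3}, for every nondecreasing tuple $\mathbf{j} = (j_1, \dots, j_{n-1})$ with $k \geq j_1 \geq \cdots \geq j_{n-1} \geq 0$, the rectangle $P(\mathbf{j}) := \prod_{i=1}^{n-1}[0, \alpha^i_{j_i}] \times [0, \alpha]$ belongs to $\calR$.

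On each horizontal axis $x_i$ we introduce the scaled Rademacher functions $r^i_l := r_l$ of period $2^{1-l}$. The set $\Theta_k$ is then built as a union of translates of the smallest cell $R_0 \times [0, \alpha]$ (possibly further sparsified at a sub-dyadic scale), indexed by the strictly-decreasing tuples
\[
\Lambda_k := \{(j_1, \dots, j_{n-1}) : k-1 \geq j_1 > j_2 > \cdots > j_{n-1} \geq 0\}.
\]
To each $\mathbf{j} \in \Lambda_k$ we associate a translate $T_\mathbf{j}(R_0 \times [0, \alpha])$ whose position is encoded through the Rademacher phases $r^i_{k-j_i}$, chosen so that (a) the $T_\mathbf{j}$'s are pairwise disjoint, and (b) for each $\mathbf{j}$, the witness set
\[
Y_\mathbf{j} := \bigcup\{\sigma P(\mathbf{j}) : \sigma \text{ translation with } \sigma P(\mathbf{j}) \supseteq T_\mathbf{j}(R_0 \times [0, \alpha])\}
\]
has measure comparable to $2^{n-1}\alpha\prod_{i}\alpha^i_{j_i}$ and the overlaps among distinct $Y_\mathbf{j}$'s are controlled. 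Finally we set $Y_k := \bigcup_{\mathbf{j} \in \Lambda_k} Y_\mathbf{j}$.

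Point (i) is immediate. For (iii), each $x \in Y_\mathbf{j}$ lies in some $\sigma P(\mathbf{j})$ containing an $R_0 \times [0,\alpha]$-translate inside $\Theta_k$, so
\[
M_\calR \chi_{\Theta_k}(x) \geq \frac{|R_0 \times [0,\alpha]|}{|P(\mathbf{j})|} = \prod_{i=1}^{n-1} \frac{\alpha^i_0}{\alpha^i_{j_i}} \geq 2^{-\sum_i j_i} \geq 2^{-(n-1)k + n(n-1)/2},
\]
using $j_i \leq k - i$ for strictly decreasing $\mathbf{j}$; since $n(n-1)/2 \geq n-1$ for $n \geq 2$, this is $\geq 2^{n-1}\cdot 2^{(1-n)k}$. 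The volume estimate (ii) is the crux: one must combine $|\Theta_k| \asymp \binom{k}{n-1}\cdot \alpha\prod_i \alpha^i_0$ (or a suitably smaller quantity if a sub-dyadic sparsification is used) with a lower bound on $|Y_k|$ exploiting the IID identity for Rademacher functions across different scales. This Rademacher-independence estimate, analogous in spirit to the summation that produces the factor $k^{n-1}$ in the proof of Lemma~\ref{LEMMA1}, guarantees that the $Y_\mathbf{j}$'s contribute nearly additively to $|Y_k|$, yielding $|Y_k| \geq \frac{2^{5-3n}}{(n-1)!}\cdot 2^{(n-1)k}k^{n-1}|\Theta_k|$. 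The constants $(n-1)!$ and $2^{5-3n}$ arise from counting the $\binom{k}{n-1}$ strictly-decreasing tuples and from cumulative dyadic losses; the threshold $k \geq 2n-6$ is used to ensure $\binom{k}{n-1} \geq \frac{k^{n-1}}{2^{n-1}(n-1)!}$ with a clean constant.

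The main obstacle is precisely this Rademacher-independence step in (ii): ensuring that the placement encoded via $r^i_{k-j_i}$ makes the overlaps $|Y_\mathbf{j} \cap Y_{\mathbf{j}'}|$ (for $\mathbf{j} \neq \mathbf{j}'$ in $\Lambda_k$) small enough that the union essentially behaves additively, while simultaneously keeping $|\Theta_k|$ small enough to recover the extra factor $k^{n-1}$ over the trivial ``one-shape'' bound $2^{(n-1)k}$. This is the technical heart of Stokolos's $n=3$ construction, and its extension requires iterating the Rademacher trick coordinate by coordinate along the $n-1$ horizontal axes, relying on the IID identity from the Preliminaries to decouple the scales.
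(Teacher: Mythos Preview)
Your sketch diverges from the paper's construction in a way that breaks part~(iii). You build $\Theta_k$ as a \emph{union} of translates of the smallest cell $R_0\times[0,\alpha]$ and then bound
\[
M_\calR\chi_{\Theta_k}(x)\ \geq\ \frac{|R_0\times[0,\alpha]|}{|P(\mathbf{j})|}\ =\ \prod_{i=1}^{n-1}\frac{\alpha^i_0}{\alpha^i_{j_i}}\ \geq\ 2^{-\sum_i j_i}.
\]
The last inequality is in the wrong direction: since the $\alpha^i_j=2^{-m^i_j}$ are strictly increasing in $j$, the integers $m^i_0>m^i_1>\cdots>m^i_k$ satisfy $m^i_0-m^i_{j_i}\geq j_i$, hence $\alpha^i_0/\alpha^i_{j_i}=2^{-(m^i_0-m^i_{j_i})}\leq 2^{-j_i}$. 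Nothing prevents the gaps $m^i_0-m^i_{j_i}$ from being enormous, so the ratio $|R_0\times[0,\alpha]|/|P(\mathbf{j})|$ can be arbitrarily small and cannot be bounded below by any function of $k$ alone. Your argument for~(iii) therefore fails, and no placement of the cells via regular Rademacher phases $r_l$ (at scale $2^{-l}$ rather than at the actual scales $2^{-m^i_j}$) can repair this.

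The paper avoids this obstacle by a structurally different construction. The set $\Theta$ is defined not as a union of translated cells but as an \emph{intersection} of Rademacher conditions taken at the \emph{actual} dyadic scales $m^i_j$: one sets $\chi_\Theta(x)=\prod_{i=1}^n\prod_{j=1}^k r_{m^i_j}(x_i)$, and for each nonincreasing tuple $J$ one relaxes this to a \emph{subset} of the constraints to obtain $Y_J\supseteq\Theta$. A second essential ingredient you omit is that the $n$-th coordinate is also given a Rademacher structure, via $m^n_j:=p-j$ (so $\alpha\leq 2^{-k-1}$ is used here). With these choices, $Y_J$ is a disjoint union of translates of a rectangle $R_J\in\calR$, and the ratio $|\Theta|/|Y_J|$ is computed purely by \emph{counting} Rademacher factors: it equals a fixed constant times $2^{(1-n)k}$, \emph{independently of the specific scales $m^i_j$}. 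This decoupling is exactly what makes~(iii) uniform over $Y_k$. Your outline for~(ii) is closer in spirit to the paper's Claim on overlaps, but since~(iii) already fails, the proposal does not constitute a proof.
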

\begin{demo}
According to Remark~\ref{rmk.stok3}, the hypothesis made on $\calR$ implies that, for each $k\in\N$, one can find increasing sequences of dyadic numbers $\alpha_0^i<\alpha_1^i<\cdots <\alpha_k^i$, $1\leq i\leq n-1$ and an integer $p\geq k+1$ such that, for any nonincreasing sequence $k\geq j_1\geq j_2\geq\cdots\geq j_{n-1}\geq 0$, one has:
\begin{equation}\label{eq.stok-vs-dm}
\left(\prod_{i=1}^{n-1} [0,\alpha^i_{j_i}]\right)\times [0, 2^{-p}]\in\calR.
\end{equation}
Given $0\leq j\leq k$, let $\alpha^i_j= 2^{-m^i_j}$ for $1\leq i\leq n$ and define $m_j^n:=p-j\geq 1$. Define also $R_0:=\prod_{i=1}^{n} [0,2^{-m^i_k}]$. 
Denote by $C(k + 1, n-1)$ the set of all nonincreasing $(n-1)$-tuples $J=(j_1,\dots, j_{n-1})$ of integers in $\{0,1,\dots, k\}$ and note that one has:
$$
\# C(k+1,n-1) \geq C_{k+1}^{n-1}=\frac{(k+1) k \cdots (k-n+3)}{(n-1)!}\geq \frac{1}{2^{n-3} (n-1)!} k^{n-1},
$$
given that $k\geq 2n-6$.

Define a set $\Theta\subseteq\R^n$ (to avoid unnecessary indices here, we write $\Theta$ and $Y$ instead of $\Theta_k$ and $Y_k$, since $k$ 
remains unchanged in this whole proof) by asking that, for any $x\in\R^n$, one has:
$$
\chi_{\Theta}(x)=\prod_{i=1}^n \prod_{j_i=1}^k r_{m^i_{j_i}}(x_i).
$$
For $J=(j_1,\dots, j_{n-1})\in C(k+1,n-1)$, let $j_0:=k$, $j_{n}:=0$ and define a set $Y_J\subseteq\R^n$ by asking that, for any $x\in\R^n$, one has:
$$
\chi_{Y_J} (x)=\prod_{i=1}^n \prod_{\mu_i=j_i}^{j_{i-1}} r_{m^i_{\mu_i}}(x_i),
$$
and let $Y:=\bigcup_{J\in C(k+1,n-1)} Y_J$. Clearly, (i) holds.

It is clear, since the Rademacher functions $(r_i)$ form an IID sequence, that one 
has $|\Theta|=2^{-nk}|R_0|$ and, for $J\in C(k +1,n-1)$, $|Y_J|=2^{1-k-n}|R_0|$.

We now write, for $1\leq i\leq n-1$
\begin{equation}\label{eq.Y}
Y=\bigcup_{j_1=0}^k \bigcup_{j_2=0}^{j_1}\cdots \bigcup_{j_{i-1}=0}^{j_{i-2}} \bigcup_{J'\in C(j_{i-1} +1,n-i)} Y_{j_1,\dots j_{i-1}, J'}.
\end{equation}
Hence, we define, for $1\leq i\leq n$ and $k\geq j_1\geq j_2\geq\cdots \geq j_{i-1}\geq 0$:
$$
E_{j_1,\dots, j_{i-1}}:=\bigcup_{J'\in C(j_{i-1} +1,n-i)} Y_{j_1,\dots j_{i-1}, J'};
$$
with this definition, we get in particular $E_{j_1,j_2,\dots, j_{n-1}}=Y_{j_1,\dots, j_{n-1}}$ for $k\geq j_1\geq j_2\geq\cdots\geq j_{n-1}\geq 0$.

\begin{Claim}\label{cl.E}
For all $1\leq r\leq n$ and $k\geq j_1\geq j_2\geq\cdots \geq j_{r-2}\geq 0$, we have:
$$
\left|\bigcup_{j=0}^{j_{r-2}} E_{j_1,\dots, j_{r-2}, j}\right|\geq\frac 12 \sum_{j=0}^{j_{r-2}} |E_{j_1,\dots, j_{r-2}, j}|.
$$
\end{Claim}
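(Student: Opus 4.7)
The plan is to factor each $E_{j_1,\ldots,j_{r-2},j}$ by the coordinate on which each of its defining constraints acts, and then exploit the mutual independence of the Rademacher functions across distinct coordinates. Concretely, I would write
\[
E_{j_1,\ldots,j_{r-2},j}=A\cap\bigl(B_j\times C_j\bigr),
\]
where $A$ gathers the fixed constraints in coordinates $1,\ldots,r-2$ (and so does not depend on $j$), $B_j\subseteq\R$ is the coordinate-$(r-1)$ event ``$r_{m^{r-1}_\mu}(x_{r-1})=1$ for all $\mu\in[j,j_{r-2}]$'', and $C_j\subseteq\R^{n-r+1}$ is the path-existence event ``there exist integers $j\geq j_r\geq\cdots\geq j_{n-1}\geq 0$ with $r_{m^i_\mu}(x_i)=1$ on the required range in each coordinate $i=r,\ldots,n$''. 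Since $A$, $B_j$ and $C_j$ depend on pairwise disjoint blocks of coordinates, Lebesgue measures factor as Cartesian products, and $|A|$ is common to every term so can be divided out.

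After this reduction, the claim becomes the statement
\[
\Bigl|\bigcup_{j=0}^{j_{r-2}}(B_j\times C_j)\Bigr|\geq\tfrac 12\sum_{j=0}^{j_{r-2}}|B_j|\,|C_j|,
\]
and Fubini reduces this in turn to the pointwise-in-$y$ inequality
\[
\Bigl|\bigcup_{j\in J_y}B_j\Bigr|\geq\tfrac 12\sum_{j\in J_y}|B_j|,\qquad J_y:=\{j\in[0,j_{r-2}]:y\in C_j\}.
\]
Fix such a $y$ with $J_y\neq\emptyset$ and set $j^\ast:=\max J_y$. By the IID property of the Rademacher functions, $|B_j|=2^{-(j_{r-2}-j+1)}|I_{r-1}|$ (with $I_{r-1}$ the ambient one-dimensional interval in the $(r-1)$-th coordinate direction, and up to a harmless boundary factor of $2$ when $j_{r-2}=k$ that cancels in the ratio below). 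Since $|B_j|=2^{j-j^\ast}|B_{j^\ast}|$ for each $j\leq j^\ast$, a geometric summation yields
\[
\sum_{j\in J_y}|B_j|\leq\sum_{j=0}^{j^\ast}|B_j|=|B_{j^\ast}|\sum_{d=0}^{j^\ast}2^{-d}<2\,|B_{j^\ast}|\leq 2\Bigl|\bigcup_{j\in J_y}B_j\Bigr|,
\]
where the last inequality uses $B_{j^\ast}\subseteq\bigcup_{j\in J_y}B_j$. Integrating over $y$ and multiplying back by $|A|$ recovers the original inequality.

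The main anticipated obstacle is the dependence of $C_j$ on $j$: unlike $B_j$, the path-existence event is not monotone in $j$ (the existence of a valid continuation starting at $j$ need not persist when $j$ is perturbed), and neither $|C_j\cap C_{j'}|$ nor the structure of $J_y$ admits a clean closed form. The virtue of the approach above is that it bypasses all such combinatorial difficulties: only the trivial inclusion $J_y\subseteq[0,j^\ast]$ and the geometric decay of $|B_j|$ in $j_{r-2}-j$ are used, so no finer information on $C_j$ is ever required. The factor $\tfrac 12$ appearing in the claim is precisely the price paid by the geometric sum $1+\tfrac 12+\tfrac 14+\cdots<2$. The degenerate case $r=1$ (where coordinate $r-1=0$ does not exist) is handled by letting coordinate $1$ play the role of coordinate $r-1$ and interpreting $j_{-1}$ as $k$; the argument is otherwise identical.
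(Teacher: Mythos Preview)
Your argument is correct and takes a genuinely different route from the paper's. The paper uses the telescoping identity
\[
\Bigl|\bigcup_{j=0}^{j_{r-2}} E_{j_1,\dots,j_{r-2},j}\Bigr|=\sum_{j=0}^{j_{r-2}}\Bigl[\,|E_{j_1,\dots,j_{r-2},j}|-\Bigl|E_{j_1,\dots,j_{r-2},j}\cap\bigcup_{l<j}E_{j_1,\dots,j_{r-2},l}\Bigr|\,\Bigr]
\]
and shows that membership in the intersection forces the single extra constraint $r_{m^{r-1}_{j-1}}(x_{r-1})=1$; since this Rademacher factor is independent of the ones already appearing in $\chi_{E_{j_1,\dots,j_{r-2},j}}$, the subtracted term is at most $\tfrac12|E_{j_1,\dots,j_{r-2},j}|$. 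You instead factor $E_{j_1,\dots,j_{r-2},j}$ as a Cartesian product $A\times B_j\times C_j$ along coordinate blocks, apply Fubini over the $C$-variables, and exploit the fact that the one–dimensional sets $B_j$ are \emph{nested} (so $\bigcup_{j\in J_y}B_j=B_{j^\ast}$) with geometrically decaying measures $|B_j|=2^{j-j^\ast}|B_{j^\ast}|$. Your geometric sum then produces the factor $\tfrac12$ directly, without ever computing an intersection of two $E$-sets.

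Both approaches ultimately rest on the same independence of Rademacher functions across distinct indices, but they organise the bookkeeping differently: the paper isolates one ``extra'' Rademacher variable per level of the telescoping sum, whereas you separate variables by coordinate once and for all and then argue purely one-dimensionally. Your route makes the origin of the constant $\tfrac12$ more transparent (it is exactly $\sum_{d\geq 0}2^{-d}=2$) and sidesteps the need to check that $r_{m^{r-1}_{j-1}}$ is indeed independent of the complicated maximum defining $C_j$; the paper's route, on the other hand, yields the sharper per–term statement that each $E_{j_1,\dots,j_{r-2},j}$ contributes at least half of its own measure to the union. Two minor remarks: your ``$A\cap(B_j\times C_j)$'' should be read as the Cartesian product $A\times B_j\times C_j$ (inside $R_0$), and the ``boundary factor'' you flag is simply the vacuous constraint $r_{m^{r-1}_k}\equiv 1$ on $[0,2^{-m^{r-1}_k})$, which indeed cancels in all ratios.
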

\begin{proof}[Proof of the claim]
To prove this claim, we write:
$$
\left|\bigcup_{j=0}^{j_{r-2}} E_{j_1,\dots, j_{r-2}, j}\right|=\sum_{j=0}^{j_{r-2}} \left[ |E_{j_1,\dots, j_{r-2}, j}|-\left| E_{j_1,\dots, j_{r-2}, j}
\cap\bigcup_{l=0}^{j-1} E_{j_1,\dots, j_{r-2}, l}\right|\right].
$$
Assuming that $0\leq j\leq j_{r-2}$ and $x\in E_{j_1,\dots, j_{r-2}, j}\cap\bigcup_{l=0}^{j-1} E_{j_1,\dots, j_{r-2}, l}$ are given, we find both:
\begin{multline*}
\prod_{i=1}^{r-2} \left[\prod_{\mu_i=j_i}^{j_{i-1}} r_{m^i_{\mu_i}}(x_i) \right]\cdot\prod_{\nu=j}^{j_{r-2}} r_{m^{r-1}_{\nu}}(x_{r-1})
 \\\cdot \max_{J'\in C(j +1,n-r)} \left[\prod_{\xi=j_{r+1}}^{j} r_{m^{r}_{\xi}}(x_r)\prod_{i=r+1}^n \prod_{\mu_i=j_i}^{j_{i-1}} r_{m^i_{\mu_i}}(x_i)\right]
=1,
\end{multline*}
and, for some $1\leq l\leq j-1$:
\begin{multline*}
\prod_{i=1}^{r-2} \left[\prod_{\mu_i=j_i}^{j_{i-1}} r_{m^i_{\mu_i}}(x_i) \right]\cdot\prod_{\nu=l}^{j_{r-2}} r_{m^{r-1}_{\nu}}(x_{r-1})
\\\cdot \max_{J'\in C(l +1,n-r)} \left[\prod_{\xi=j_{r+1}}^{l} r_{m^{r}_{\xi}}(x_r)\prod_{i=r+1}^n \prod_{\mu_i=j_i}^{j_{i-1}} r_{m^i_{\mu_i}}(x_i)\right]= 1,
\end{multline*}
so that we have:
\begin{eqnarray}\nonumber \lefteqn{ r_{m^{r-1}_{j-1}}(x_{r-1})\chi_{E_{j_1,\dots, j_{r-2}, j}}(x)}&&\\\label{eq.sttt}
& = & \prod_{i=1}^{r-2} \left[\prod_{\mu_i=j_i}^{j_{i-1}} r_{m^i_{\mu_i}}(x_i) \right]\cdot\prod_{\nu=j-1}^{j_{r-2}} r_{m^{r-1}_{\nu}}(x_{r-1})\\\nonumber&&
\cdot \max_{J'\in C(j +1,n-r)} \left[\prod_{\xi=j_{r+1}}^{j} r_{m^{r}_{\xi}}(x_r)\prod_{i=r+1}^n \prod_{\mu_i=j_i}^{j_{i-1}} r_{m^i_{\mu_i}}(x_i)\right]\\
\nonumber & = & 1.
\end{eqnarray}
We then get:
\begin{eqnarray*}
&  & \left| E_{j_1,\dots, j_{r-2}, j}\cap\bigcup_{l=0}^{j-1} E_{j_1,\dots, j_{r-2}, l}\right| \leq \int_{R_0}r_{m^{r-1}_{j-1}}(x_{r-1})\chi_{E_{j_1,\dots, j_{r-2}, j}}(x)\,dx\\
& = & \frac 12 \int_{R_0}\chi_{E_{j_1,\dots, j_{r-2}, j}}\\
& = & \frac 12 |E_{j_1,\dots, j_{r-2}, j}|,\\
\end{eqnarray*}
using the IID property of the Rademacher functions, Fubini's theorem and the fact that the first series 
of factors in (\ref{eq.sttt}) only depend on $x_1,\dots, x_{r-1}$. The proof of the claim is hence complete.
\end{proof}

We now turn to the proof of (ii). Observe that it now follows from (\ref{eq.Y}) and from Claim~\ref{cl.E}
that we have:
$$
|Y|\geq \frac{1}{2^{n-1}}\sum_{J\in C(k +1,n-1)} |Y_J|,
$$
for we recall that given $J=(j_1,\dots, j_{n-1})\in C(k+1,n-1)$, one has $E_{j_1,\dots, j_{n-1}}=Y_J$. This yields:
$$
|Y|\geq 2^{2-2n-k} |R_0| \# C(k+1,n-1)\geq \frac{2^{5-3n-k}}{(n-1)!} k^{n-1} |R_0|= \frac{2^{5-3n}}{(n-1)!} \cdot 2^{(n-1)k}k^{n-1} |\Theta|,
$$
which completes the proof of (ii).

To prove (iii), fix $x\in Y$ and choose $J\in C(k+1,n-1)$ such that one has $x\in Y_J$. Now observe that $Y_J$ is the disjoint union of 
rectangles, the lenghts of whose $n$ sides are $2^{-m^1_{j_1}}$,  $2^{-m^2_{j_2}}$,  $\cdots$,  $2^{-m^{n-1}_{j_{n-1}}}$ and $2^{-p}$; 
letting 
$R_J:=[0,2^{-m^1_{j_1}}]\times [0,2^{-m^2_{j_2}}]\times\cdots\times [0,2^{-m^{n-1}_{j_{n-1}}}]\times [0,2^{-p}]$, we see that $R_J\in \calR$ and that there exists a translation $\tau$ of $\R^n$ for which we have $x\in\tau(R_j)$. Periodicity conditions moreover show that we have:
$$
\frac{|\tau(R_J)\cap \Theta|}{|R_J|}=\frac{|R_J\cap \Theta|}{|R_J|}=\frac{|Y_J\cap\Theta|}{|Y_J|}=\frac{|\Theta|}{|Y_J|}= 2^{n-1}\cdot 2^{(1-n)k},
$$
so that we can write:
$$
M_\calR \chi_{\Theta}(x)\geq\frac{1}{|R_J|} \int_{\tau(R_J)} \chi_\Theta=\frac{|\tau(R_J)\cap\Theta|}{|R_J|}\geq 2^{n-1}\cdot 2^{(1-n)k},
$$
which finishes to prove (iii), and completes the proof of the lemma.
\end{demo}

\begin{Remark}\label{rmk.stok22}
A careful look as Stokolos' proof of \cite[Theorem, p.~491]{STOKOLOS2008} on Soria bases (see Remark~\ref{rmk.stok2008} above) shows that, in case $n=3$, it is a condition of type (\ref{eq.stok-vs-dm}) that Stokolos actually uses in his proof to get an inequality similar to the one obtained in Claim~\ref{cl.Mphi} for the maximal operator $M_\calR$ when $\calR$ is a Soria basis.

It is clear also from the proof of Lemma~\ref{lem.stokn} that condition (\ref{eq.stok-vs-dm}) is the one we really use to prove Theorem~\ref{thm.stokn}. Formulated as in Theorem~\ref{thm.stokn} though, this condition reads as a possibility of finding a sequence of dyadic numbers $(\alpha_k)$ tending to zero, such that the family obtained by projecting, on the $n-1$ first coordinates, the rectangles in $\calR$ having their last side equal to $\alpha_k$, contains a large number of rectangles, many of them being incomparable, and which enjoys some intersection property.

Theorem~\ref{thm.stokn} also gives a way of constructing, from a strict chain of standard, dyadic rectangles, a translation-invariant basis $\calB$ in $\R^n$ for which $L\log^{n-1} L(\R^n)$ is the optimal Orlicz space to be differentiated by $\calB$.
\end{Remark}

We now turn to examine how the $L\log^{n-1} L$ weak estimate can be improved with some additional information on the \emph{projections on a coordinate plane} of the rectangles belonging to the family under study.

\section{Some conditions on projections}\label{sec.5}

When a family of rectangles projects on a coordinate plane onto a family of $2$-dimensional rectangles having finite width, a weak $L\log^{n-2}L$ inequality holds.
\begin{Proposition} \label{Prop1}
Fix $\calR$ a family of standard dyadic rectangles in $\R^n$, $n\geq 2$, and assume that there exists a projection $p:\R^n\to\R^2$ onto one of the coordinate planes, 
such that the family $\calR':=\{p(R):R\in\calR\}$ has finite width. Under these assumptions, there exists a constant $c>0$ such that, 
for any measurable $f$ and any $\lambda>0$, one has:
$$
|\{M_\calR f>\lambda\}|\leq c\int_{\R^n} \frac{|f|}{\lambda}\left(1+\log_+^{n-2}\frac{|f|}{\lambda}\right).
$$
\end{Proposition}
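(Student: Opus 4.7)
The plan is to exploit the product decomposition $\R^n=\R^2\times\R^{n-2}$ associated to the projection $p$, and to iterate a weak-type $(1,1)$ estimate in the projection plane (coming from Stokolos's Theorem~2 above together with the finite-width assumption) with the classical Jessen--Marcinkiewicz--Zygmund weak $L\log^{n-3}L$ estimate in the complementary coordinates.

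Without loss of generality, assume $p$ is the projection onto the $x_1x_2$-plane. Splitting coordinates as $(y,z)\in\R^2\times\R^{n-2}$, each $R\in\calR$ factors as a product $R=R_y\times R_z$ with $R_y=p(R)\in\calR'$ and $R_z$ a dyadic rectangle in $\R^{n-2}$; set $\calQ:=\{R_z:R\in\calR\}$. Applying Fubini inside each average $\frac{1}{|R|}\int_{\tau(R)}|f|$ yields the pointwise domination
$$
M_\calR f(y,z)\leq M^{(y)}_{\calR'}\bigl[M^{(z)}_\calQ f\bigr](y,z),
$$
where $M^{(y)}_{\calR'}$ denotes the maximal operator associated to $\calR'$ acting in the $y$ variable (with $z$ fixed) and $M^{(z)}_\calQ$ the maximal operator associated to $\calQ$ acting in $z$. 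The finite-width hypothesis on $\calR'$, via Stokolos's Theorem~2 above, forces $M^{(y)}_{\calR'}$ to be of weak type $(1,1)$ in $\R^2$ with a constant independent of $z$; on the other hand, $M^{(z)}_\calQ$ is dominated by the strong maximal operator on $\R^{n-2}$ and hence satisfies a weak $L\log^{n-3}L$ estimate uniformly in $y$, by the de~Guzm\'an/Jessen--Marcinkiewicz--Zygmund theorem (with the usual conventions $L\log^0L=L^1$ when $n=3$, and the trivial case $n=2$ where $M^{(z)}_\calQ$ reduces to the identity).

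To conclude, I would combine these two ingredients through the standard JMZ-style iteration: composing a weak $(1,1)$ operator with a weak $L\log^d L$ operator acting on a disjoint block of variables produces a weak $L\log^{d+1}L$ estimate for the composition. Concretely, for fixed $\lambda>0$ and $f\geq 0$ I would decompose $f=f_1+f_2$ with $f_1:=f\chi_{\{f>\lambda/4\}}$, so that $|f_2|\leq\lambda/4$ and hence $M^{(y)}_{\calR'}M^{(z)}_\calQ f_2\leq\lambda/4$ by maximality of both operators; then apply slice-wise in $z$ the truncated weak $(1,1)$ bound for $M^{(y)}_{\calR'}$ to $g:=M^{(z)}_\calQ f_1$, and control the resulting integral $\int_{\{g>\lambda/c\}} g\,dy\,dz$ via Cavalieri's principle together with the weak $L\log^{n-3}L$ estimate on $g$. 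The main obstacle is the non-integrability of $M^{(z)}_\calQ f_1$ over $\R^n$, which makes a direct use of the weak $L\log^{n-3}L$ distribution function in the layer-cake integral divergent; this is circumvented by a further dyadic decomposition of $f_1$ into level sets and use of the sharper $L^p$ bound ($p>1$) available for $M^{(z)}_\calQ$ on the upper pieces, after which a change of variable $s=|f_1|/t$ converts the $\Phi_{n-3}$-bound into the desired $\Phi_{n-2}$-bound, as in Fava's induction step.
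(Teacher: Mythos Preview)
Your approach is correct and rests on the same two pillars as the paper's proof: (a) the weak $(1,1)$ bound coming from the finite-width hypothesis via Stokolos' theorem, and (b) de~Guzm\'an's combination lemma for product bases. The organization, however, differs. You split $\R^n=\R^2\times\R^{n-2}$ in one shot, pairing the weak $(1,1)$ estimate on the projection plane with the JMZ weak $L\log^{n-3}L$ estimate on the remaining $n-2$ coordinates. The paper instead argues by induction on $n$, peeling off one coordinate at a time via $\R^n=\R\times\R^{n-1}$: the first factor is a family of intervals (weak $(1,1)$), while the second factor inherits the finite-width projection hypothesis and is handled by the inductive step. Both routes feed the same de~Guzm\'an product theorem \cite[Theorem, p.~50]{Guzman1975}; the paper simply invokes it and then carries out the explicit $\varphi_1,\varphi_2$ computation, whereas you sketch a Fava-style iteration by hand. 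Your direct $2+(n-2)$ split is arguably tidier, but the last paragraph of your proposal~---~the ``further dyadic decomposition'' and appeal to $L^p$ bounds to circumvent the non-integrability of $M^{(z)}_{\calQ}f_1$~---~is more elaborate than necessary: the truncated weak $(1,1)$ inequality $|\{M^{(y)}g>2\lambda\}|\leq |\{g>\lambda\}|+C\lambda^{-1}\int_{\{g>\lambda\}}g$ together with the layer-cake formula and the weak $L\log^{n-3}L$ bound for $g$ already does the job, exactly as in de~Guzm\'an's cited theorem.
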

\begin{Remark}
When $n=3$, this result is due to Stokolos (see \cite{STOKOLOS2005}). We mention it here in dimension $n$ since the proof is straightforward and yields an interesting comparison with the examples in section~\ref{sec.4} above.
\end{Remark}
\begin{demo}
We can assume without loss of generality that $p:\R^n\to\R^2, (x_1,\dots, x_n)\mapsto (x_{n-1},x_n)$ is the projection onto the $x_{n-1}x_n$-plane. Let us prove the result by recurrence on the dimension $n$. It is clear that the result holds if $n=2$, for then $\calR=\calR'$ and hence, it is standard (see \emph{e.g.} \cite[Theorem~10]{MR2012}) 
to see that the maximal operator $M_{\calR}$ satisfies a weak $(1,1)$ inequality.

So fix now $n\geq 3$ and assume that the results holds in dimension $n-1$. Denote by 
$p_1:\R^n\to\R, (x_1\dots, x_n)\mapsto x_1$ the projection on the first axis and by 
$p_1':\R^n\to\R^{n-1}, (x_1,\dots, x_n)\mapsto (x_2,\dots, x_n)$ the projection on the last $n-1$ coordinates, and observe that, by hypothesis, the families:
$$
\calR_1:=\{p_1(R):R\in\calR\}\quad\text{and}\quad \calR_2:=\{p_1'(R):R\in\calR\}
$$
satisfy weak inequalities as in de Guzm\'an \cite[p.~186]{Guzman1974} with $\varphi_1(t):=t$ and $\varphi_2(t):=t(1+\log_+^{n-3}t)$. 
According to \cite[Theorem, p. 50]{Guzman1975} and to the obvious inclusion $\calR_1\times\calR_2\supseteq \calR$,
we then have, for each measurable $f$ and each $\lambda>0$:
$$
|\{M_\calR f>\lambda\}|\leq \varphi_2(1)\int_{\R^n} \varphi_1\left(\frac{2|f|}{\lambda}\right)+\int_{\R^n}\left[
\int_{1}^{\frac{4|f(x)|}{\lambda}} \varphi_1\left(\frac{4|f(x)|}{\lambda\sigma}\right)\,d\varphi_2(\sigma)\right]\,dx.
$$
Now compute, for $t>1$ and $n\geq 4$:
$$
\varphi_2'(t)=1+\log^{n-3} t+(n-3)\log^{n-4} t\leq 1+(n-2)\log^{n-3} t,
$$
inequality which also holds for $n=3$. We hence have, for $x\in\R^n$ and $\lambda>0$:
$$
\int_{1}^{\frac{4|f(x)|}{\lambda}} \varphi_1\left(\frac{4|f(x)|}{\lambda\sigma}\right)\,d\varphi_2(\sigma)\leq
\int_{\R^n} \frac{4|f(x)|}{\lambda} \left[\int_1^{\frac{4|f(x)|}{\sigma}} \frac{1+(n-2)\log^{n-3} \sigma}{\sigma}\,d\sigma\right]\,dx,
$$
yet we compute:
$$
\int_1^{\frac{4|f(x)|}{\sigma}} \frac{1+(n-2)\log^{n-3} \sigma}{\sigma}\,d\sigma\leq \log_+\frac{4|f(x)|}{\lambda}+\log_+^{n-2}\frac{4|f(x)|}{\lambda},
$$
so that we can write:
$$
|\{M_\calR f>\lambda\}|\leq 2\int_{\R^n} \frac{|f|}{\lambda}\left[ 1+ 2\log_+\frac{4|f|}{\lambda}+2\log_+^{n-2} \frac{4|f|}{\lambda}\right].
$$
Yet we have on one hand:
$$
\int_{\{4|f|\leq \lambda e\}} \frac{|f|}{\lambda}\left[ 1+ 2\log_+\frac{4|f|}{\lambda}+2\log_+^{n-2} 
\frac{4|f|}{\lambda}\right]\leq 5\int_{\{4|f|\leq\lambda e\}} \frac{|f|}{\lambda},
$$
and on the other hand:
$$
\int_{\{4|f|> \lambda e\}} \frac{|f|}{\lambda}\left[ 1+ 2\log_+\frac{4|f|}{\lambda}+2\log_+^{n-2} 
\frac{4|f|}{\lambda}\right]\leq \int_{\{4|f|>\lambda e\}} \frac{|f|}{\lambda}\left[1+4\log_+^{n-2}\frac{4|f|}{\lambda}\right].
$$
Summing up these inequalities we obtain:
$$
|\{M_\calR f>\lambda\}|\leq 10\int_{\R^n} \frac{|f|}{\lambda}+
8\log^{n-2} 4\int_{\R^n} \frac{|f|}{\lambda}\left(1+\log_+\frac{|f|}{\lambda}\right)^{n-2}.
$$
Computing again:
$$
\int_{\{|f|\leq\lambda e\}} \frac{|f|}{\lambda}\left(1+\log_+\frac{|f|}{\lambda}\right)^{n-2}\leq 2^{n-2} \int_{\R^n} \frac{|f|}{\lambda},
$$
as well as:
$$
\int_{\{|f|>\lambda e\}} \frac{|f|}{\lambda}\left(1+\log_+\frac{|f|}{\lambda}\right)^{n-2}\leq 2^{n-2} \int_{\R^n} \frac{|f|}{\lambda} \log_+^{n-2}\frac{|f|}{\lambda}.
$$
We finally get:
$$
|\{M_\calR f>\lambda\}|\leq (10+2^{n+1}\log^{n-2}4) \int_{\R^n} \frac{|f|}{\lambda} \left(1+\log_+^{n-2}\frac{|f|}{\lambda}\right),
$$
and the proof is complete.
\end{demo}

Using the results in section~\ref{sec.4}, it is easy to provide an example in $\R^n$, $n\geq 3$ satisfying the hypotheses of the previous proposition, for which the $L\log^{n-2}L$ estimate is sharp.
\begin{Example}\label{ex.genn} Assume $n\geq 3$ and denote by $\calR$ the family of rectangles in $\R^{n-1}$ defined in Lemma~\ref{lem.stokn2}, with $n$ replaced by $n-1$. For each $k\in\N$, denote by $\Theta_k$ and $Y_k$ the subsets of $\R^{n-1}$ associated to $\calR$ as in Lemma~\ref{lem.stokn2}. 

Now define:
$$
\tilde{\calR}:=\{R\times [0,1]: R\in\calR\},
$$
let $\tilde{Y}_k:=Y_k\times [0,1]$ and $\tilde{\Theta}_k:=\Theta_k\times [0,1]\subseteq\tilde{Y}_k$.
It is clear that one has $|\tilde{Y}_k|\geq c(n) 2^{(n-2)k} k^{n-2}|\tilde{\Theta}_k|$ with $c(n):=\frac{1}{3\cdot 2^{n-3}}$.

Observe, finally, that if $x=(x_1,\dots, x_{n-1},x_n)\in \tilde{Y}_k$ is given, one can find, according to the proof of Lemma~\ref{lem.stokn2}, a rectangle $R\in\calR$ in $\R^{n-1}$ such that we have $(x_1,\dots, x_{n-1})\in R$ and:
$$
\frac{|R\cap \Theta_k|}{|R|}\geq 2^{(2-n)k}.
$$
Letting $\tilde{R}:=R\times [0,1]$, we get $x\in\tilde{R}$ and:
$$
M_{\tilde{\calR}}\chi_{\tilde{\Theta}_k}(x)\geq\frac{ |(R\times [0,1])\cap (\Theta\times [0,1])|}{|R\times [0,1]|}\geq 2^{(2-n)k}.
$$
It hence follows that $\tilde{R}$ satisfies the hypotheses of Proposition~\ref{prop.stokn} with $d=n-2$. According to this proposition, we see that the $L\log^{n-2}L$ weak estimate for $M_{\tilde{\calR}}$ is sharp. On the other hand, it is clear that the projections of rectangles in $\tilde{\calR}$ onto the $x_{n-1}x_n$ coordinate plane form a family of rectangles in $\R^2$ having finite width.
\end{Example}

The following property, introduced by Stokolos \cite{STOKOLOS2005}, also generalizes in a straightforward way to the $n$-dimensional case.
\begin{Definition} A family of standard, dyadic rectangles in $\R^n$ satisfies property (C) if there exists a projection $p$ onto one coordinate plane enjoying the following property:
\begin{itemize}
\item[(C)] there exists an integer $k\in\N^*$ such that for any finite family of rectangles $R_{1}, \ldots, R_{k} \in {{\calR}}$ whose projections $p(R_{i})$ are \textit{ pairwise comparable}, one can find integers $1\leq i< j\leq k$ with $R_{i} \sim R_{j}$.
\end{itemize}
\end{Definition}

\begin{Proposition} \label{Prop2}
If a family of rectangles  in $\R^n$, $n \geq 3$, satisfies property (C), then it has weak type $L \log^{n-2} L$. 
\end{Proposition}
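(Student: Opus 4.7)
\begin{demo}
The plan is to argue by induction on the dimension $n\geq 3$, mimicking the structure of the proof of Proposition~\ref{Prop1}. The base case $n=3$~---~in which we need to show that a family $\calR$ satisfying (C) has weak type $L\log L$~---~is due to Stokolos \cite{STOKOLOS2005}. For the inductive step, we fix $n\geq 4$, assume that Proposition~\ref{Prop2} holds in dimension $n-1$, and let $\calR$ be a family of standard dyadic rectangles in $\R^n$ satisfying property (C) with respect to some projection $p$ onto a coordinate plane. After a permutation of coordinates (which affects neither the hypothesis nor the conclusion), we may assume that $p$ is the projection onto the $x_{n-1}x_n$-plane.

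Denoting, as in the proof of Proposition~\ref{Prop1}, by $p_1:\R^n\to\R$ the projection onto the first coordinate and by $p_1':\R^n\to\R^{n-1}$ the projection onto the last $n-1$ coordinates, we set $\calR_1:=\{p_1(R):R\in\calR\}$ and $\calR_2:=\{p_1'(R):R\in\calR\}$, so that $\calR\subseteq\calR_1\times\calR_2$. The family $\calR_1$ being a family of $1$-dimensional dyadic intervals, $M_{\calR_1}$ will satisfy a weak $L^{\varphi_1}$ inequality with $\varphi_1(t):=t$. The crucial step is to verify that $\calR_2$ itself satisfies property (C) for the projection $p':\R^{n-1}\to\R^2$ onto the $x_{n-1}x_n$-plane (a legitimate projection because, thanks to our choice of coordinates, $p$ factors as $p=p'\circ p_1'$): given rectangles $S_1,\dots,S_k\in\calR_2$ whose images $p'(S_i)$ are pairwise comparable, we lift each $S_i$ to some $R_i\in\calR$ with $p_1'(R_i)=S_i$, observe that $p(R_i)=p'(S_i)$ are then pairwise comparable as well, and apply property (C) to $\calR$ to obtain indices $i<j$ with $R_i\sim R_j$, which immediately yields $S_i=p_1'(R_i)\sim p_1'(R_j)=S_j$.

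Having established that $\calR_2$ satisfies property (C) in $\R^{n-1}$, the inductive hypothesis ensures that $M_{\calR_2}$ satisfies a weak $L^{\varphi_2}$ inequality with $\varphi_2(t):=t(1+\log_+^{n-3}t)$. The conclusion will then follow exactly as at the end of the proof of Proposition~\ref{Prop1}: since $\calR_1\times\calR_2\supseteq\calR$, the composition estimate of de Guzm\'an \cite[Theorem, p.~50]{Guzman1975} applied to the weak $L^{\varphi_1}$ and $L^{\varphi_2}$ inequalities above will yield the desired weak $L\log^{n-2}L$ inequality for $M_\calR$ after the very same computation as the one carried out in the proof of Proposition~\ref{Prop1}.

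The main obstacle of the argument is the inheritance of property (C) by $\calR_2$; the lifting step there relies crucially on the possibility of splitting off a coordinate ($x_1$) that is \emph{not} among those onto which $p$ projects, so that $p$ factors through $p_1'$. Were we instead to peel off a coordinate lying in the image plane of $p$, the descent would fail, but the freedom to permute coordinates makes the favorable choice always available.
\end{demo}
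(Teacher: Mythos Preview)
Your proof is correct and follows essentially the same route as the paper: induction on $n$ with the base case $n=3$ taken from Stokolos~\cite{STOKOLOS2005}, the split $\calR\subseteq\calR_1\times\calR_2$ via $p_1$ and $p_1'$, and then de~Guzm\'an's composition theorem combined with the computation already done in Proposition~\ref{Prop1}. In fact your write-up is slightly more complete than the paper's, which simply asserts that $\calR_2$ satisfies the required weak inequality ``by hypothesis'' without spelling out the lifting argument showing that $\calR_2$ inherits property~(C).
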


\begin{demo} Without loss of generality we can assume that $p:\R^n\to\R^2$, $(x_1,\dots, x_n)\mapsto (x_{n-1},x_n)$ 
is the projection onto the $x_{n-1}x_n$-plane. Let us prove the result by recurrence on the dimension $n$. For $n=3$, 
the conclusion follows from Theorem 3 in \cite{STOKOLOS2005}. So fix now $n\geq 4$ and assume that the result
holds in dimension $n-1$. Denote by $p_1:\R^n\to\R, (x_1\dots, x_n)\mapsto x_1$ the projection on the first axis and 
by $p_1':\R^n\to\R^{n-1}, (x_1,\dots, x_n)\mapsto (x_2,\dots, x_n)$ the projection on the last $n-1$ coordinates, and observe that, by hypothesis, the families:
$$
\calR_1:=\{p_1(R):R\in\calR\}\quad\text{and}\quad \calR_2:=\{p_1'(R):R\in\calR\}
$$
satisfy weak inequalities as in de Guzm\'an \cite[p.~185]{Guzman1974} with $\varphi_1(t):=t$ and $\varphi(t):=t(1+\log_+^{n-3}t)$. 
We proceed as in Proposition \ref{Prop1}. 
\end{demo}

\begin{Remark} When $n=3$, the above proposition is shown in \cite{STOKOLOS2005} to be sharp.
Example~\ref{ex.genn} shows again that this estimate cannot be improved in general.
\end{Remark}

\subsection*{Acknowledgements.} E. D'Aniello would like to thank the Universit\'e Paris-Sud and especially Mrs.~Martine Cordasso for their logistic and financial support in March 2016. Both authors would like to thank the Institut Henri Poincaré, and especially Mrs.~Florence Lajoinie, for their kind hospitality during the period when this work was finalized. {Last but not least, the authors would like to thank the referee for his/her careful reading and his/her nice suggestions.}

\vspace{0.3cm}
\noindent
\small{\textsc{Emma D'Aniello},}
\small{\textsc{Dipartimento di Matematica e Fisica},}
\small{\textsc{Scuola Politecnica e delle Scienze di Base},}
\small{\textsc{Seconda Universit\`a degli Studi di Napoli},}
\small{\textsc{Viale Lincoln n. 5, 81100 Caserta,}}
\small{\textsc{Italia},}
\footnotesize{\texttt{emma.daniello@unina2.it}.}

\vspace{0.3cm}
\noindent
\small{\textsc{Laurent Moonens},}
\small{\textsc{Laboratoire de Math\'ematiques d'Orsay, Universit\'e Paris-Sud, CNRS UMR8628, Universit\'e Paris-Saclay},}
\small{\textsc{B\^atiment 425},}
\small{\textsc{F-91405 Orsay Cedex},}
\small{\textsc{Fran\-ce},}
\footnotesize{\texttt{laurent.moonens@math.u-psud.fr}.}

\bibliography{biblio}
\bibliographystyle{plain}
\end{document}